\newtheorem{thm}{Theorem}
\newtheorem{cor}{Corollary}
\newtheorem*{Cor}{Corollary}
\newtheorem*{Lem}{Lemma}
\theoremstyle{definition}
\newtheorem*{Rem}{Remark}
\def\HH{{\mathbb H}}
\def\real{{\mathbb R}}
\begin{document}
\title[Multilinear Embedding and Hardy's Inequality]
{Multilinear Embedding and Hardy's Inequality}
\author{William Beckner}
\address{Department of Mathematics, The University of Texas at Austin,
1 University Station C1200, Austin TX 78712-0257 USA}
\email{beckner@math.utexas.edu}
\begin{abstract}
Multilinear trace restriction inequalities are obtained for Hardy's inequality. 
More generally, detailed development is given for new multilinear forms for Young's 
convolution inequality, and a new proof for the multilinear Hardy-Littlewood-Sobolev inequality,
and its realization on hyperbolic space. 
\end{abstract}
\dedicatory{for Shanzhen Lu}
\maketitle

Hardy's inequality
\begin{gather}
f \rightsquigarrow   (Tf)(x) = \frac1x \int_0^x f(t)\, dt\ ,\quad x>0 \label{eq1}\\
\noalign{\vskip6pt}
\int_0^\infty \Big| \frac1x \int_0^x f(t)\,dt \Big|^p\, dx 
< \Big( \frac{p}{p-1}\Big)^p \int_0^\infty |f(x)|^p\,dx \ ,\quad 1<p<\infty \nonumber
\end{gather}
lies at the core of understanding for many parts of modern Fourier analysis --- embedding estimates, 
maximal functions, singular integrals, weighted inequalities and measures of uncertainty. 
This operator plays an intrinsic role for understanding how dilation invariance and representation 
for convolution integrals on a group determine optimal estimates.
Recent work by S.~Lu and his collaborators has led to new multilinear and higher-dimensional 
forms for Hardy's inequality. 
Our purpose here is to consider such operators in the context of classical convolution inequalities. 

\section{Multilinear convolution inequalities}

Consider the convolution integral with multi-component decomposition 
$$F(w) =\int_{\real^n\times\cdots\times\real^n} 
G(w-y)\, H(y)\,dy\ ,\qquad w\in \real^{mn}$$
\begin{itemize}
\item[(1)] ``diagonal trace restriction''
$$F(w) \rightsquigarrow F(\underbrace{x,\ldots,x}_{m\text{ slots}}) 
\equiv F(x)\ ,\qquad x\in \real^n$$
\item[(2)] ``multilinear products''
$$F(x) = \int_{\real^{mn}} \prod g_k (x-y_k)\, H(y_1,\ldots,y_m)\, dy\ ;$$
\end{itemize}
the objective is to determine how the components of $G$ and the 
nature of $H$ control the size of $F$.
%An important point to emphasize initially is that only for special cases 
%will the analysis reduce to an iterative or product function characterization.
Here the $g_k$'s will be taken as inputs,  so 
the multilinear map  is given by 
\begin{equation}\label{eq1.1}
H\in L^p (\real^{mn}) \rightsquigarrow F\in L^q (\real^n)\ ,\qquad 1< p < q
\end{equation}

A relatively simple  characterization for this framework can be   
obtained from the classical Young's inequality using a varied set of techniques.

\begin{thm} \label{thm:Young}
For   $1<s_k < 2 \le q <\infty$, $k=1,\ldots, m$  and $m/2 + 1/q = \sum 1/s_k$ 
\begin{equation}\label{eq:thm1}
\|F\|_{L^q(\real^n)} \le C\prod \|g_k\|_{L^{s_k}(\real^n)}  
\|H\|_{L^2 (\real^{mn})}
\end{equation}
$C$ is a constant less than or equal to $1$.
\end{thm}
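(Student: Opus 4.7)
The plan is to pass to the Fourier side, where the $L^2$ hypothesis on $H$ makes Plancherel available. A direct Fourier computation of the multi-slot convolution evaluated on the diagonal gives
$$\hat F(\eta) = \int_{\xi_1 + \cdots + \xi_m = \eta} \hat H(\xi)\prod_{k=1}^m \hat g_k(\xi_k)\, d\sigma(\xi),$$
where $d\sigma$ is the fiber measure obtained by disintegrating Lebesgue measure on $\real^{mn}$ along the projection $\pi(\xi_1,\ldots,\xi_m)=\xi_1+\cdots+\xi_m$. Since $q\ge 2$, Hausdorff--Young reduces $\|F\|_q$ to $\|\hat F\|_{q'}$; and since each $s_k<2$, the same inequality allows me to replace $\|g_k\|_{s_k}$ on the right-hand side by $\|\hat g_k\|_{s_k'}$. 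The whole target inequality is thus reformulated purely in Fourier variables.

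Next I apply Cauchy--Schwarz fiberwise to split
$$|\hat F(\eta)|^2 \le A(\eta)\,B(\eta),\qquad A(\eta) = \int_{\sum\xi_k=\eta}|\hat H|^2\,d\sigma,\qquad B(\eta) = \bigl(|\hat g_1|^2 * \cdots * |\hat g_m|^2\bigr)(\eta).$$
Disintegration and Plancherel give $\|A\|_1=\|H\|_2^2$, while iterated Young's convolution on $\real^n$ gives $\|B\|_r \le \prod_k\|\hat g_k\|_{s_k'}^{2}$ for any $r\in[1,\infty]$ with $\sum 2/s_k' = m-1+1/r$. An application of Hölder in $\eta$ with conjugate exponents $2/q'$ and $2/(2-q')$ to $|\hat F|^{q'}\le(AB)^{q'/2}$ then produces
$$\|\hat F\|_{q'} \le \|A\|_1^{1/2}\,\|B\|_r^{1/2},\qquad r = \frac{q}{q-2}.$$
A direct calculation shows that the hypothesis $m/2 + 1/q = \sum 1/s_k$ is exactly equivalent to $\sum 2/s_k' = m-1+1/r$ at this value of $r$, so the Young exponents are admissible. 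Chaining the bounds yields $\|F\|_q \le C\,\|H\|_2\prod\|g_k\|_{s_k}$; the constant collects one Hausdorff--Young factor for $F$, one Young factor on $\real^n$, and $m$ Hausdorff--Young factors for the $g_k$, each of which is at most $1$ in the admissible range, so $C\le 1$ as claimed.

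The main obstacle is the exponent bookkeeping: one must check that $q'\in(1,2]$, that $r=q/(q-2)\in[1,\infty]$, and that each $p_k=s_k'/2\ge 1$, so that Hausdorff--Young and Young's inequality apply in their correct regimes. The endpoint $q=2$ (where $r=\infty$ and the Young step becomes an $L^\infty$ bound on an $m$-fold convolution) together with the limit $q\to\infty$ (where $r\to 1$) serve as useful sanity checks that the scaling identity lines up correctly with the strict constraint $1<s_k<2$.
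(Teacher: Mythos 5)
Your argument is correct: the fiber formula for $\hat F$, the fiberwise Cauchy--Schwarz split $|\hat F|^2\le A\,B$, the identity $\|A\|_{1}=\|H\|_{2}^{2}$ via Plancherel, the $m$-fold Young bound for $B$ at $r=q/(q-2)$, and the H\"older step with exponents $2/q'$ and $2/(2-q')$ all check out, and the scaling identity $\sum 2/s_k'=m-1+1/r$ is indeed equivalent to $m/2+1/q=\sum 1/s_k$; every constant involved (Hausdorff--Young, Young, H\"older, Cauchy--Schwarz) is at most $1$, so $C\le 1$. This is, however, a genuinely different route from the paper's. The paper never leaves the physical side: it dualizes, expands the square of the dual functional to produce the bilinear form with kernel $k=\prod_k(g_k*\tilde g_k)$ (a map from $L^{q'}$ to $L^{q}$), and then bounds $\|k\|_{L^{q/2}}$ by H\"older across the product together with Young's inequality for each $g_k*\tilde g_k$, with the same exponent bookkeeping $1/t_k=q(1/s_k-1/2)$. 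Your proof is essentially the Fourier mirror of that argument --- your $B$ is precisely $\hat k$ --- but it replaces duality by a direct estimate of $\hat F$, replaces the H\"older-on-a-product step by $m$-fold Young on a convolution, and imports $\|\hat g_k\|_{s_k'}\le\|g_k\|_{s_k}$ via Hausdorff--Young. What your route buys is that the symmetric Fourier formulation the paper records separately (Corollary~\ref{cor:constraints}, the quantity $(|\hat h|^2\times|\hat g_1|^2*\cdots*|\hat g_m|^2)(0)$) becomes transparent, and $H$ is handled by Plancherel with no dual exponent gymnastics; what the paper's route buys is that the physical-side multilinear form is exactly the object to which the Brascamp--Lieb--Luttinger rearrangement and gaussian reduction apply, which is how the sharp constant of Corollary~\ref{cor:optimal} is then obtained --- your Hausdorff--Young steps would not reach that sharp gaussian value. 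One small point to make explicit: the pointwise fiber representation of $\hat F$ and the interchange of integrations should be justified first for, say, Schwartz data and then extended by density, since a priori $H\in L^2$ and $g_k\in L^{s_k}$ only; the final inequality makes this routine.
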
 

\begin{proof} 
Consider the dual problem (primes denote dual exponents, $1/q + 1/q' =1$)
\begin{equation}\label{eq:thm1pf}
\bigg[ \int_{\real^{nm}}  \Big| \int_{\real^n} \prod_k g_k (x-y_k)\, h(x)\,dx\Big|^2 \,dy
\bigg]^{1/2} 
\le C \prod \|g_k\|_{L^{s_k}(\real^n)} \|h\|_{L^{q'}(\real^n)}
\end{equation}
then the left-hand side becomes 
\begin{equation*}
\int_{\real^n\times\real^n} \overline{h(w)} \prod (g_k  *  \tilde g_k) (w-x) h (x)\,dx\,dw
\end{equation*}
with $\tilde f(x)  = \bar f (-x)$.
Here the problem can be equivalently stated as finding the norm for the operator
\begin{equation*}
h\in L^{q'} (\real^n) \rightsquigarrow k * h \in L^q (\real^n)
\end{equation*}
with $k = \prod_k (g_k * \tilde g_k)$ --- a map from a space to its dual. 
So applying Young's inequality, it will suffice to show that $k\in L^r (\real^n)$ with $r=q/2$.
To complete the proof, we need to exhibit a sequence of indices $\{t_k\}$ with $\sum 1/t_k =1$ 
and such that $g_k * g_k \in L^r (\real^n)$ with $r = q t_k/2$.
Then $1/t_k = q [1/s_k - 1/2]$ and $\sum 1/t_k = q [\sum 1/s_k - m/2] =1$ by hypothesis. 
By this argument, the classical value of $C=1$ can be used.
\end{proof}

The structure of inequality \eqref{eq:thm1pf} allows application of the Brascamp-Lieb-Luttinger 
rearrangement   inequality \cite{BLL}  
so that the functions $h$, $\{g_k\}$ can be replaced by their radial decreasing rearrangements 
in order to have an improved inequality. 
Then methods from \cite{BL} determine the sharp constant $C$ by calculating the optimal value 
when the functions are gaussian. 
{From} this explicit calculation, one can see that the analysis does not reduce to iterative processes 
or product function states. 
For example, when $m=2$ extremal functions for \eqref{eq:thm1} will have the form 
$$e^{-\alpha_1 y_1^2} e^{-\alpha_2 y_2^2} e^{-\beta (y_1-y_2)^2}\qquad 
\alpha_1,\alpha_2,\beta >0$$
which is not a product function in the variables $y_1,y_2$.

\begin{cor}\label{cor:optimal}
The optimal value for the constant $C$ in Theorem~\ref{thm:Young} is given by 
\begin{equation}\label{eq:cor-optimal}
C = \bigg[ 2^{-m} (q')^{2/q'} \prod_{k=1}^m (s_k)^{2/s_k} \prod_{\ell=0}^m (\lambda_\ell)^{\lambda_\ell}
\bigg]^{n/4}
\end{equation}
where $\lambda_0 = 1 - 2/q$ and $\lambda_k = 1-2/s'_k$ with $\sum \lambda_\ell=1$.
\end{cor}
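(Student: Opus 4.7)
The plan is to execute the strategy sketched in the paragraph preceding the corollary. Working from the dual form \eqref{eq:thm1pf}, I would first square both sides so that, after applying the Brascamp--Lieb--Luttinger rearrangement inequality \cite{BLL} to take $h$ and each $g_k$ nonnegative and radially decreasing, the inequality reads
\begin{equation*}
\int_{\real^{n(m+2)}} \prod_k g_k(x-y_k)\,g_k(w-y_k)\,h(x)\,h(w)\,dx\,dw\,dy \le C^2 \prod_k \|g_k\|_{L^{s_k}}^2 \|h\|_{L^{q'}}^2.
\end{equation*}
This is a Brascamp--Lieb integral on $\real^{n(m+2)}$ with $2m+2$ projections onto $\real^n$ and exponents $1/s_k, 1/s_k, 1/q', 1/q'$; the dimensional balance $n(m+2) = 2n\sum 1/s_k + 2n/q'$ is precisely the scaling hypothesis of Theorem~\ref{thm:Young}, so by the Gaussian reduction of \cite{BL} it suffices to test against centered Gaussians.

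Next, I would substitute $g_k(z) = e^{-\alpha_k|z|^2}$ and $h(z) = e^{-\gamma|z|^2}$. The $y_k$-integrations decouple and yield $\prod_k (\pi/2\alpha_k)^{n/2}$ together with a quadratic form in $(x,w)$; the remaining $2n$-dimensional Gaussian has block determinant factoring as $(\gamma(B+\gamma))^n$ with $B := \sum\alpha_k$, while the right-hand Gaussian $L^{s_k}$ and $L^{q'}$ norms are standard. Dividing, all $\pi$-powers cancel by the scaling identity, leaving
\begin{equation*}
\frac{\text{LHS}^2}{\text{RHS}^2} = 2^{-mn/2}(q')^{n/q'}\prod_k s_k^{n/s_k}\cdot \gamma^{n\lambda_0/2}\prod_k \alpha_k^{n\lambda_k/2}\,(\gamma+B)^{-n/2},
\end{equation*}
once one identifies $n\lambda_k/2 = n(1/s_k - 1/2)$ and $n\lambda_0/2 = n(1/q' - 1/2)$.

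Finally, I would Lagrange-optimize over $\alpha_k, \gamma > 0$. The stationary conditions force $\alpha_k = \lambda_k T$ and $\gamma = \lambda_0 T$ with $T := \gamma + \sum\alpha_k$ a free parameter; the consistency requirement $\sum_\ell \lambda_\ell = 1$ is a restatement of the theorem's scaling hypothesis and reflects the dilation invariance of the underlying operator. At the optimum, all $T$-dependence cancels and the variable part of the ratio collapses to $\prod_\ell \lambda_\ell^{n\lambda_\ell/2}$; taking a square root to undo the initial squaring reassembles \eqref{eq:cor-optimal}. The main obstacle here is the algebraic bookkeeping: one must carefully match the Gaussian exponents to the $\lambda_\ell$ and verify the clean $\pi$- and $T$-cancellations, each of which is the algebraic signature that the scaling hypothesis encodes precisely the correct homogeneity for a dilation-invariant extremal family.
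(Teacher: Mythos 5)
Your proposal is correct and takes essentially the same route as the paper's proof: square the dual inequality \eqref{eq:thm1pf}, reduce to gaussian trial functions via the Brascamp--Lieb--Luttinger rearrangement and the gaussian reduction of \cite{BL}, compute the ratio explicitly for gaussians, and maximize over the parameters, where the stationary point $a_\ell \propto \lambda_\ell$ gives the value $\prod_\ell \lambda_\ell^{\lambda_\ell}$ exactly as in \eqref{eq:cor1pf}. The only differences are cosmetic: you use unnormalized gaussians and an explicit Lagrange computation where the paper uses normalized gaussians and a brief variational remark.
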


\begin{proof} 
As observed above, application of the Brascamp-Lieb-Luttinger rearrangement inequality allows 
determination of the best constant to be attained when all functions are gaussian. 
Consider positive parameters $\{a_k\}$, $k=0,1,\ldots,m$ with 
\begin{gather*}
h(x) = (a_0)^{n/2} e^{-\pi a_0 x^2}\ ,\quad 
g_k (x) = (a_k)^{n/2} e^{-\pi a_k x^2}\ ,\qquad 
k=1,\ldots,m\\
\noalign{\vskip6pt}
\|h\|_{L^{q'} (\real^n)} = (a_0)^{n/2q} (q')^{-n/2q'}\ ;\quad 
\|g_k\|_{L^{s_k} (\real^n)} = (a_k)^{n/2s'_k} (s_k)^{-n/2s_k}
\end{gather*}
Then 
$$(g_k \times g_k) (x) = (a_k/2)^{n/2} e^{-\pi a_k x^2/2}$$
and 
$$C= 2^{-mn/4} (q')^{n/2q'} \prod_k (s_k)^{n/2s_k} \max 
\bigg[ \frac{\prod a_\ell^{\lambda_\ell}}{\sum a_\ell} \bigg]^{n/4}$$
Observe by variation argument for $a_\ell >0$ and $\sum \lambda_\ell =1$
\begin{equation}\label{eq:cor1pf}
\max \bigg[ \frac{\prod a_\ell^{\lambda_\ell}}{\sum a_\ell}\bigg] = \prod \lambda_\ell^{\lambda_\ell}
\end{equation}
\end{proof}

The symmetry in the role of the functions $h$ and $\{ q_k\}$ for the computation of the optimal 
constant suggests that there should be an equivalent formulation of the inequality in 
Theorem~\ref{thm:Young} that displays this symmetry. 
Such a result can be given using the Fourier transform and shows the connection with the 
Hausdorff-Young inequality as suggested by the role of a map from a space to its dual.
And a closer look at the left-hand side in \eqref{eq:thm1pf} shows that the role of $h$ can be 
interchanged with  any single $g_k$ which also displays this symmetry.

\begin{cor}\label{cor:constraints}
With constraints as given in Theorem~\ref{thm:Young}, an equivalent inequality is 
\begin{equation}\label{eq:cor-constraints}
\Big( |\hat h|^2 \times |\hat g_1|^2 * \cdots * |\hat g_m|^2\Big) (0) 
\le C^2 \Big[ \|h\|_{L^{q'}(\real^n)} \prod \|g_k\|_{L^{s_k} (\real^n)} \Big]^2
\end{equation}
with the value of $C$ given in \eqref{eq:cor-optimal}.
\end{cor}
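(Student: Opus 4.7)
I would prove the corollary by applying Plancherel's theorem to the dual formulation \eqref{eq:thm1pf}, whose equivalence with \eqref{eq:thm1} was already established in the proof of Theorem~\ref{thm:Young}. The goal is simply to recognize the square of the left-hand side of \eqref{eq:thm1pf} as the stated $(m+1)$-fold convolution evaluated at the origin.

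First I would set $F(y_1,\ldots,y_m)=\int_{\real^n} h(x)\prod_k g_k(x-y_k)\,dx$, so that the left-hand side of \eqref{eq:thm1pf} is $\|F\|_{L^2(\real^{mn})}$. A direct computation of the Fourier transform in $y=(y_1,\ldots,y_m)$ uses the fact that $y_k\mapsto g_k(x-y_k)$ has $y_k$-transform $e^{-2\pi i x\cdot\xi_k}\hat g_k(-\xi_k)$, and Fubini assembles
$$\hat F(\xi_1,\ldots,\xi_m)=\hat h(\xi_1+\cdots+\xi_m)\prod_{k=1}^m\hat g_k(-\xi_k).$$
Plancherel on $\real^{mn}$ then gives
$$\|F\|_{L^2(\real^{mn})}^2=\int_{\real^{mn}} |\hat h(\xi_1+\cdots+\xi_m)|^2\prod_{k=1}^m |\hat g_k(-\xi_k)|^2\,d\xi.$$

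Next I would substitute $\xi_k \mapsto -\xi_k$ in each slot to produce $\int |\hat h(-\sum_k\xi_k)|^2 \prod_k |\hat g_k(\xi_k)|^2\,d\xi$, and then recognize this via iterated application of $(\phi*\psi)(0)=\int \phi(-y)\psi(y)\,dy$ as the value $(|\hat h|^2 * |\hat g_1|^2 * \cdots * |\hat g_m|^2)(0)$. Combining this identity with the dualization step from Theorem~\ref{thm:Young} yields \eqref{eq:cor-constraints} with precisely the constant \eqref{eq:cor-optimal}. The symmetric roles of $|\hat h|^2$ and the $|\hat g_k|^2$'s on the Fourier side are then manifest, mirroring the symmetric appearance of $q'$ and the $s_k$'s in the gaussian extremal calculation of Corollary~\ref{cor:optimal}.

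There is no real obstacle here: the argument is essentially a bookkeeping exercise in Plancherel together with Fourier conventions. The only subtlety is tracking the signs of the frequency variables carefully so that the convolution-at-origin structure emerges cleanly without requiring any symmetry hypothesis on $h$ or the $g_k$'s.
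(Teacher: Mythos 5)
Your argument is correct and is essentially the route the paper intends: the corollary is stated without an explicit proof, but it rests on the dual formulation \eqref{eq:thm1pf}, which the proof of Theorem~\ref{thm:Young} already rewrites as the quadratic form $\int \overline{h(w)}\,\prod_k (g_k * \tilde g_k)(w-x)\,h(x)\,dx\,dw$, and applying Parseval there (the transform of the product kernel being $|\hat g_1|^2 * \cdots * |\hat g_m|^2$) yields exactly the convolution-at-the-origin identity you obtain by computing $\hat F$ on $\real^{mn}$ and invoking Plancherel. Your sign bookkeeping is accurate, and since the identity is exact, the equivalence with constant $C^2$ from \eqref{eq:cor-optimal} follows as you state.
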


A more general result, though with restricted conditions for an elementary proof, can be given 
for the multilinear map 
$$H \in L^p (\real^{nm}) \rightsquigarrow F\in L^q (\real^n)\qquad 1<p\le q<\infty$$

\begin{thm}\label{thm2}
For $1<p<q<\infty$, $1<s_k < p'/q'$, $k=1,\ldots,m$ and $1/q	 + m/p' = \sum 1/s_k$, 
$m<p'/q'$
\begin{equation}\label{eq:thm2}
\|F\|_{L^q(\real^n)} \le C \prod \|g_k\|_{L^{s_k} (\real^n)} \|H\|_{L^p(\real^{nm})} 
\end{equation}
$C$ is a constant less than or equal to $1$.
\end{thm}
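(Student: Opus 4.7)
The plan is to derive \eqref{eq:thm2} by elementary means, combining H\"older's inequality in $y$ with Minkowski's integral inequality and a generalized H\"older inequality in $x$, bypassing the duality-plus-convolution argument used for Theorem~\ref{thm:Young}. The idea is to split each kernel factor into two pieces whose exponents naturally pair with the conjugate pair $(p,p')$.

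Set $\beta_k = p(1 - s_k/p')$, so that $\beta_k > 0$ (since $s_k < p'/q' < p'$) and $\beta_k/p + s_k/p' = 1$. Factor
$$H(y)\prod g_k(x-y_k) = \Bigl[H(y)\prod |g_k(x-y_k)|^{\beta_k/p}\Bigr]\cdot\prod |g_k(x-y_k)|^{s_k/p'}$$
and apply H\"older's inequality in $y$ with exponents $p,p'$. By Fubini the second H\"older factor evaluates to the $x$-independent constant $\prod\|g_k\|_{s_k}^{s_k/p'}$, leaving
$$|F(x)|^p \le \Bigl[\int |H(y)|^p\prod |g_k(x-y_k)|^{\beta_k}\,dy\Bigr]\prod\|g_k\|_{s_k}^{s_kp/p'}.$$
Taking the $L^{q/p}(dx)$ norm (legitimate since $q\ge p$), Minkowski's integral inequality pulls the norm inside the $y$-integration; generalized H\"older in $x$ with exponents $\mu_k = s_kp/(\beta_k q)$ then bounds $\|\prod|g_k(\cdot-y_k)|^{\beta_k}\|_{L^{q/p}(dx)}$ by $\prod\|g_k\|_{s_k}^{\beta_k}$ uniformly in $y$. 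The scaling hypothesis $\sum 1/s_k = 1/q + m/p'$ is precisely the H\"older condition $\sum 1/\mu_k = 1$, and the identity $\beta_k + s_kp/p' = p$ collapses the chain of estimates to $\|F\|_q \le \|H\|_p \prod \|g_k\|_{s_k}$, i.e.\ $C=1$.

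The main obstacle---where the hypotheses $s_k<p'/q'$ and $m<p'/q'$ enter in an indispensable way---is the admissibility of the H\"older exponents $\mu_k$, that is, $\mu_k\ge 1$, or equivalently $s_k \ge qp'/(q+p')$. The bound $s_k > 1$ by itself is insufficient. The key observation is that the strict lower bound $1/s_j > q'/p'$, applied to the $m-1$ indices $j\ne k$ in the scaling equation, forces
$$\frac1{s_k} < \frac1q + \frac{m}{p'} - (m-1)\frac{q'}{p'} \le \frac1q + \frac1{p'},$$
the last inequality reducing to $(m-1)(q'-1)\ge 0$, which is automatic since $q' > 1$. Hence $\mu_k \ge 1$ throughout, with equality only in the trivial case $m=1$, where the entire argument collapses to a single use of Young's inequality on $\real^n$.
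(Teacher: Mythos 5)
Your argument is correct, and it takes a genuinely different route from the paper. The paper dualizes: it tests against $h\in L^{q'}(\real^n)$, applies H\"older with weights $\beta_k=q(1/s_k-1/p')$ to the inner $x$-integral so that the $L^{p'}(dy)$ norm factors into one-dimensional pieces $\|u_k*|h|\|_{L^{p'/t_k}}^{\beta_k}$ with $u_k=|g_k|^{t_k}$, and then applies Young's inequality to each factor; the restrictive hypotheses $s_k<p'/q'$ and $m<p'/q'$ are exactly what make that per-factor Young step admissible. You instead work on the primal side, splitting each kernel via $\beta_k/p+s_k/p'=1$, using H\"older in $y$, Minkowski's integral inequality for the $L^{q/p}$ norm, and H\"older in $x$ --- the classical H\"older--Minkowski proof of Young's inequality adapted to the multilinear setting --- and all exponent bookkeeping checks out, giving $C=1$. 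Two remarks on where the hypotheses actually enter. First, your claim that $s_k<p'/q'$ is ``indispensable'' for $\mu_k\ge1$ is overstated: since each $\mu_j>0$ (this only needs $s_j<p'$, i.e.\ $\beta_j>0$) and $\sum_j 1/\mu_j=1$ by the scaling relation, every $\mu_k\ge1$ automatically; equivalently, your own computation with the weaker bound $1/s_j>1/p'$ in place of $q'/p'$ already yields $1/s_k<1/q+1/p'$. Second, and as a consequence, your proof nowhere uses $s_k<p'/q'$ or $m<p'/q'$: it needs only $1<s_k<p'$, $p\le q$ and $1/q+m/p'=\sum 1/s_k$, so it in fact establishes the stronger Theorem~\ref{thm:multilinear} (and Theorem~\ref{thm:extension}) directly, without the iterative reduction the paper employs to pass from the restricted Theorem~\ref{thm2} to the general multilinear Young inequality. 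What the paper's dual formulation buys, by contrast, is the bilinear ``map from a space to its dual'' structure that it later exploits for rearrangement and sharp-constant computations; your primal argument is simpler and more general for the $C\le1$ bound but does not expose that structure.
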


\begin{proof} 
Consider the dual problem (primes denote dual exponents, $1/q + 1/q' =1$)
\begin{equation}\label{eq:thm2pf}
\bigg[ \int_{\real^{nm}} \Big| \int_{\real^n} \prod g_k (x-y_k)h(x)\,dx \Big|^{p'}\,dy\bigg]^{1/p'}
\le C \, \prod \|g_k\|_{L^{s_k}(\real^n)} \|h\|_{L^{q'} (\real^n)}
\end{equation}
Choose the sequence $\beta_k = 1/t_k = q (1/s_k - 1/p')$;  
then  $\sum \beta_k =1$ and $1< t_k < s_k$. 
%this is possible since $\sum 1/s_k <1$ using $m < p'/q'$.
Apply H\"older's inequality to the inner integral to give
$$\Big| \int \prod g_k (x-y_k) h(x)\,dx\Big|
\le \prod_k \Big[ \big(u_k * |h|\big)(-y_k)\Big]^{\beta_k} $$
for $u_k = |g_k|^{t_k}$. 
Then 
\begin{equation*}
\bigg[ \int_{\real^{nm}} \Big| \int_{\real^n} \prod g_k (x-y_k) h(x)\,dx\Big|^{p'}\, dy\bigg]^{1/p'}
 \le \prod  \Big[\big\|u_k * |h|\,\big\|_{L^{p'/t_k} (\real^n)}\Big]^{\beta_k}
\end{equation*}
Observe that $p'/t_k >1$ since $t_k < s_k < p'/q' < p'$.
Then using Young's inequality
\begin{equation*}
\big\| u_k * |h|\big\|_{L^{p'/t_k}(\real^n)} 
\le \Big[ \|g_k\|_{L^{s_k} (\real^n)}\Big]^{t_k} \| h\|_{L^{q'}(\real^n)}
\end{equation*}
and 
\begin{equation*}
\prod\Big[ \big\|u_k * |h|\big\|_{L^{p'/t_k}(\real^n)} \Big]^{\beta_k} 
\le \Big[ \prod \| g_k\|_{L^{s_k} (\real^n)}\Big] \| h\|_{L^{p'} (\real^n)}
\end{equation*}
which demonstrates inequality \eqref{eq:thm2} with the classical value of 
$C=1$ from Young's inequality.
\end{proof}

There is no overlap between these two theorems since for $p=2$ the second theorem requires 
$m< 2/q' < 2$.
More generally, the method of radial-decreasing equimeasurable rearrangements and reduction 
to gaussian functions (see \cite{BL}) can be applied for the case where $p'$ is  a positive  integer 
at least two. 
For $m=2$ an iterative argument for multidimensional estimates allows a full determination 
in terms of allowed parameters for this multilinear extension of Young's inequality.

\begin{thm}\label{thm:extension}
For $1< p < q < \infty$, $1< s_1,s_2 < p' < \infty$ and $2/p' + 1/q = 1/s_1 + 1/s_2$
\begin{equation}\label{eq:extension}
\|F \|_{L^q (\real^n)} \le C \|g_1 \|_{L^{s_1} (\real^n)} \|g_2\|_{L^{s_2} (\real^n)} 
\| H\|_{L^p(\real^n\times\real^n)}
\end{equation}
$C$ is a constant less than or equal to $1$.
\end{thm}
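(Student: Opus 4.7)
The plan is to iterate one-dimensional Young's inequality on the dual formulation, one convolution variable at a time, exactly as suggested before the statement. Passing to the dual as in Theorem~\ref{thm2}, it suffices to bound
\begin{equation*}
J := \bigg[\int_{\real^{2n}} \Big| \int_{\real^n} g_1(x-y_1)\,g_2(x-y_2)\, h(x)\,dx\Big|^{p'} dy_1\,dy_2\bigg]^{1/p'}
\end{equation*}
by $C\,\|g_1\|_{s_1}\|g_2\|_{s_2}\|h\|_{q'}$. Fix $y_1$ and set $F_{y_1}(x) = g_1(x-y_1)\,h(x)$; then the inner integral is a convolution of $F_{y_1}$ with a reflection of $g_2$ evaluated at $y_2$. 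Applying Young's inequality in the $y_2$ variable yields
\begin{equation*}
\Big[\int_{\real^n} |\cdot|^{p'} dy_2\Big]^{1/p'} \le \|g_2\|_{s_2}\,\|F_{y_1}\|_{L^a(\real^n)}\ ,\qquad \frac{1}{a}=1+\frac{1}{p'}-\frac{1}{s_2}\ ,
\end{equation*}
and the hypothesis $1<s_2<p'$ places $a\in(1,p')$, so this step is legitimate.

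Raising to the $p'$-th power and integrating over $y_1$ reduces the problem to estimating
\begin{equation*}
\int_{\real^n} \Big(\int_{\real^n} |g_1(x-y_1)|^a |h(x)|^a\,dx\Big)^{p'/a} dy_1 = \big\| |g_1|^a * |\tilde h|^a\big\|_{L^{p'/a}(\real^n)}^{p'/a}\ .
\end{equation*}
A second application of Young's inequality, with exponents $s_1/a$ and $q'/a$, gives
\begin{equation*}
\big\||g_1|^a * |\tilde h|^a\big\|_{L^{p'/a}}\le \|g_1\|_{s_1}^a\,\|h\|_{q'}^a\ ,
\end{equation*}
provided $a/s_1+a/q' = 1+a/p'$, i.e.\ $1/s_1+1/q'=1/a+1/p'$. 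Substituting the formula for $a$ turns this into $1/s_1+1/s_2 = 2/p'+1/q$, which is the scaling relation assumed. Combining the two steps produces $J\le \|g_1\|_{s_1}\|g_2\|_{s_2}\|h\|_{q'}$, giving $C\le 1$.

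The one point that actually requires the hypotheses (beyond the scaling identity) is the admissibility $1\le s_1/a,q'/a\le\infty$ at the second Young step. The inequality $a\le s_1$ follows from $1/s_1+1/s_2=2/p'+1/q\le 1+1/p'$, which is equivalent to $p\le q$; the inequality $a\le q'$ follows from $s_1<p'$ after rearranging the scaling relation. So the ranges $1<s_1,s_2<p'$ together with $p<q$ are precisely what make the two-step iteration close. This verification of the admissible ranges is the only nontrivial bookkeeping; once it is in hand, the estimate falls out with the classical constant $C=1$.
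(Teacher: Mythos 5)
Your proof is correct and is essentially the paper's own argument: pass to the dual form and apply Young's inequality twice, once in each convolution variable, with the scaling relation closing the exponent bookkeeping. The only difference is that you integrate in $y_2$ first (so the roles of $g_1,g_2$ and the checks $a\le s_1$, $a\le q'$ replace the paper's $w(x)=g_2(x-y_2)h(x)$ and $1<r<s_2$), which is an immaterial relabeling since the hypotheses are symmetric in $s_1,s_2$.
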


\begin{proof} 
Consider the dual problem 
\begin{equation}\label{eq:pf-extension}
\begin{split}
&\Bigg[ \int_{\real^n\times\real^n} \Big|\int_{\real^n} g_1 (x-y_1) g_2 (x-y_2) h(x)\,dx\Big|^{p'} 
dy_1\, dy_2\bigg]^{1/p'}\\
\noalign{\vskip6pt}
&\qquad \le C\|g_1\|_{L^{s_1} (\real^n)} \|g_2\|_{L^{s_2}(\real^n)} \|h \|_{L^{q'}(\real^n)}
\end{split}
\end{equation}
Write the left-hand side as 
\begin{equation*}
\begin{split}
&\bigg[ \int_{\real^n} \bigg[ \int_{\real^n} \Big| \int_{\real^n} 
g_1 (x-y_1) w(x)\,dx\Big|^{p'} dy_1\bigg]^{1/p \cdot p'}
dy_2 \bigg]^{1/p'}\\
\noalign{\vskip6pt}
&\qquad = \bigg[ \int_{\real^n} \bigg[ \int_{\real^n}\big| (\tilde g_1 * w)(y_1)\big|^{p'} dy_1\bigg]^{1/p' \cdot p'}
dy_1\bigg]^{1/p'}\\
\noalign{\vskip6pt}
&\qquad \le \| g_1 \|_{L^{s_1} (\real^n)} 
\bigg[ \int_{\real^n} \Big[ \|w\|_{L^r (\real^n)} \Big]^{p'} dy_2\bigg]^{1/p'}\\
\noalign{\vskip6pt}
&\qquad = \|g_1\|_{L^{s_1}(\real^n)} \bigg[ \int_{\real^n} \Big[ \big( |\tilde g_2|^r * |h|^r\big) (y_2)\Big]^{p'/r}
dy_2 \bigg]^{r/p' \cdot 1/r}
\end{split}
\end{equation*}
where $w(x) \equiv g_2 (x-y_2)h(x)$ and $1/s_1 + 1/r -1 = 1/p'$.

It is necessary to show that $1< r < s_2$ and $r/s_2 + r/q' -1 = r/p'$. 
Then $1/r = 1/p' + 1/s'_1 < 1/s_1 + 1/s'_1 =1$ since $s_1 < p'$; and 
$1/r = 1/p' + 1/s'_1 > 1/s_2$ since 
$1/p' +1 = 2/p' + 1/p > 2/p' + 1/q = 1/s_1 + 1/s_2$ as $p< q$.
Finally $r/s_2 + r/q' -1 = r/p'$ is equivalent to 
$1/s_2 + 1/q' - 1/r = 1/p'$ or $1/s_2 + 1/q' - 1/p' = 1/r = 1/p' + 1/s'_1$ 
or $1/s_1 + 1/s_2 = 2/p' + 1-1/q' = 2/p' + 1/q$ (initial condition required by 
dilation invariance).

Under these constraints
\begin{equation*}
\bigg[ \int_{\real^n} \Big[ \big( |\tilde g_2|^r * | h|^r\big) (y_2)\Big]^{p'/r} dy_2\bigg]^{r/p' \cdot 1/r}
\le \|g_2 \|_{L^{s_2} (\real^n)} \|h\|_{L^{q'} (\real^n)}
\end{equation*}
which completes the demonstration of inequality \eqref{eq:pf-extension} 
and the proof of the theorem.
\end{proof}

By extending the argument given above, one obtains a proof by induction for the allowed 
range of parameters for convolution inequalities associated with \eqref{eq1.1} for all values of $m$.

\begin{thm}[Multilinear Young's Inequality]
\label{thm:multilinear} 
For $1< p < q< \infty$, $1< s_k < p'$, $k=1,\ldots,m$ and $1/q + m/p' = \sum 1/s_k$ 
\begin{equation}\label{eq:thm-MYE} 
\|F\|_{L^q (\real^n)} \le \prod \|g_k\|_{L^{s_k} (\real^n)} \| H\|_{L^p (\real^{nm})} 
\end{equation}
\end{thm}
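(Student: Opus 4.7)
The plan is to induct on $m$, with Theorem~\ref{thm:extension} serving as the base case $m=2$. The inductive step will peel off one factor $g_k$ via scalar Young's convolution inequality and then recognize what remains as the dual form of the $(m-1)$-linear version of the theorem.

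Passing to the dual formulation, the bound to prove is
$$\bigg[\int_{\real^{nm}}\Big|\int_{\real^n}\prod_{k=1}^m g_k(x-y_k)\,h(x)\,dx\Big|^{p'}dy\bigg]^{1/p'}\le \prod_k\|g_k\|_{L^{s_k}}\,\|h\|_{L^{q'}}.$$
I would first relabel so that $s_1=\min_k s_k$. Fixing $y_2,\ldots,y_m$ and writing the inner $x$-integral as $(\tilde g_1*w)(y_1)$ with $w(x)=\prod_{k\ge 2}g_k(x-y_k)h(x)$, scalar Young with $1/r=1/p'+1/s_1'$ yields $\bigl(\int|(\tilde g_1*w)(y_1)|^{p'}dy_1\bigr)^{1/p'}\le \|g_1\|_{L^{s_1}}\|w\|_{L^r}$. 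Raising to the $p'$-th power, integrating in the remaining variables, and expanding $\|w\|_{L^r}^{p'}$ as $\bigl(\int\prod_{k\ge 2}|g_k(x-y_k)|^r|h(x)|^r\,dx\bigr)^{p'/r}$ reduces the whole estimate to bounding
$$\bigg[\int_{\real^{n(m-1)}}\Big(\int_{\real^n}\prod_{k=2}^m G_k(x-y_k)H_0(x)\,dx\Big)^{p'/r}dy_2\cdots dy_m\bigg]^{1/p'}$$
with $G_k=|g_k|^r$ and $H_0=|h|^r$. Raising this bracketed expression to the $r$-th power produces precisely the $(m-1)$-linear dual quantity for $G_k$ and $H_0$ with rescaled parameters $p_{\mathrm{new}}'=p'/r$, $q_{\mathrm{new}}'=q'/r$, $s_{k,\mathrm{new}}=s_k/r$, and the normalizations $\|G_k\|_{L^{s_{k,\mathrm{new}}}}=\|g_k\|_{L^{s_k}}^r$, $\|H_0\|_{L^{q_{\mathrm{new}}'}}=\|h\|_{L^{q'}}^r$ match up exactly upon taking the $1/r$-th power at the end.

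A direct substitution using the definition of $r$ shows that the scaling identity $1/q_{\mathrm{new}}+(m-1)/p_{\mathrm{new}}'=\sum_{k\ge 2}1/s_{k,\mathrm{new}}$ is inherited from $1/q+m/p'=\sum 1/s_k$, that $p_{\mathrm{new}}<q_{\mathrm{new}}$ is equivalent to $p<q$, and that $s_{k,\mathrm{new}}<p_{\mathrm{new}}'$ is equivalent to $s_k<p'$. The substantive constraint is $s_{k,\mathrm{new}}>1$, equivalently $1/s_1+1/s_k<1+1/p'$ for every $k\ne 1$. This is the reason for choosing $s_1=\min_k s_k$: the hypothesis $1/s_k>1/p'$ combined with the scaling identity gives $1/s_1+\max_{k\ne 1}1/s_k\le 2/p'+1/q$, and the strict inequality $p<q$ then forces $2/p'+1/q<1+1/p'$.

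I expect the main obstacle to be exactly this constraint-tracking step: verifying that all admissibility conditions on $(p,q,s_k)$ transfer to the rescaled $(m-1)$-linear problem. Once the leading index is chosen to minimize $s_k$, every condition reduces either to the original hypothesis or to $p<q$, and the induction hypothesis then closes the argument with constant at most one.
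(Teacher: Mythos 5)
Your proposal is correct and follows essentially the same route as the paper's own proof: pass to the dual inequality, order the exponents so that the smallest $s_k$ is peeled off first by scalar Young's inequality with $1/r=1/p'+1/s_1'$, recognize the remainder as the $(m-1)$-linear dual problem in $|g_k|^r$, $|h|^r$ with exponents divided by $r$, verify that the scaling relation and the conditions $s_{k,\mathrm{new}}>1$, $s_{k,\mathrm{new}}<p_{\mathrm{new}}'$, $p_{\mathrm{new}}<q_{\mathrm{new}}$ are inherited (the key point being $r<s_k$ for $k\neq 1$, proved exactly as in the paper from $1/s_k>1/p'$ and $p<q$), and close the induction with Theorem~\ref{thm:extension} as the case $m=2$. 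The few conditions you do not spell out (e.g.\ $r>1$ and $q'/r\ge 1$) follow from the same constraint arithmetic and are treated no more explicitly in the paper.
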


\begin{proof} 
Consider the dual problem 
\begin{equation}\label{eq:dualprob}
\bigg[ \int_{\real^{nm}} \Big| \int_{\real^n} \prod g_k (x-y_k) h (x) dx\Big|^{p'} dy\bigg]^{1/p'}
\le \prod \|g_k\|_{L^{s_k} (\real^n)} \|h\|_{L^{q'}(\real^n)}
\end{equation}
The method used in Theorem~\ref{thm:extension} allows the construction of an argument 
so that the inequality for $m$ copies of $\real^n$ is reduced to proving the inequality for 
the case of $m-1$ copies of $\real^n$. 
Then the theorem is established from using Theorem~\ref{thm:extension} where the case 
$m=2$ is proved. 
Order the values of $\{s_k\}$ so that $1< s_1 \le s_2 \cdots \le s_m < p'$. 
Start with the integral
\begin{gather*}
\bigg[ \int \Big| \int_{\real^n} \prod g_k (x-y_k) h(x) dx \Big|^{p'} dy_1\bigg]^{1/p'}
= \bigg[ \int \Big| \int_{\real^n} g_1 (x-y_1) w(x) dx\Big|^{p'} dy_1\bigg]^{1/p'}\\
\noalign{\vskip6pt}
\le \|g_1 \|_{L^{s_1}(\real^n)} \|w\|_{L^r (\real^n)}
\end{gather*}
where 
$$w(x) = \prod_{k\ge 2} g_k (x-y_k) h (x)\ ,\qquad \frac1{s_1} + \frac1r -1 = \frac1{p'}\ .$$
Now it it necessary to show that $1< r < s_2 \le s_k < p'$. 
First, $1/r = 1/p' + 1/s'_1 < 1/s_1 + 1/s'_1 =1$; 
second, $m/p' + 1/q = \sum 1/s_k$ and 
\begin{gather*}
\frac2{p'} + \frac1q = \frac1{s_1} + \frac1{s_2} + \sum \left( \frac1{s_k} - \frac1{p'}\right) > \frac1{s_1}
+ \frac1{s_2}\\
\noalign{\vskip6pt}
\frac1r = \frac1{p'} + \frac1{s'_1}  = \frac2{p'} + \frac1p - \frac1{s_1} > \frac2{p'} + \frac1q - \frac1{s_1} 
> \frac1{s_2}
\end{gather*}
so $r < s_2 \le s_k < p'$.
So writing out this representation
\begin{equation*}
\bigg[ \int_{\real^{(m-1)n}} \Big[ \|w\|_{L^r (\real^n)}\Big]^{p'} dy\bigg]^{1/p'}
= \bigg[ \int_{\real^{(m-1)n}} \Big| \int 
\mathop{\prod}\nolimits' 
u_k (x-y_k) v(x) dx\Big|^{p'/r} dy \bigg]^{1/p'}
\end{equation*}
where $u_k = |g_k|^r$ and $v = |h|^r$, and for $t_k = s_k/r$, $\alpha = p'/r$, 
$\sigma' = q'/r$, then the corresponding constraint becomes $(m-1)/\alpha + 1/\sigma = \sum' 1/t_k$
which by using $1/r = 1/p' + 1/s'_1$ becomes the $m^{th}$-level constraint 
$m/p' + 1/q = \sum 1/s_k$ (here $\sum'$ indicates that some terms are not included). 
Since the number of $y$-integrations has now been reduced by one, this process can be repeated 
until one reaches $m=2$ which is already demonstrated in Theorem~\ref{thm:extension}. 
\end{proof}

Because of the iterative nature of the proof of Theorem~\ref{thm:multilinear}, it would be 
natural to consider whether this product structure could determine the nature of the optimal constant.
But since the extremal functions will not be simple product functions, such an argument would not be 
sufficient. 
However this method of iterative reduction does enable an extension to include Riesz potentials 
with a resulting multilinear Hardy-Littlewood-Sobolev inequality. 
For the case $p=2$, this problem was treated in \cite{Beckner-MRL}.
Still a different approach is given in \cite{Beckner-Essays} (see page 48) where conformal 
invariance is used. 

\begin{thm}[Multilinear Hardy-Littlewood-Sobolev Inequality]
\label{thm:HLS}
Let 
\begin{equation}\label{eq:HLS-a}
F(x) = \int_{\real^{nm}} \prod |x-y_k|^{-\lambda_k} H(y_1,\ldots, y_m)\,dy\ ,\qquad 
\lambda_k = n/s_k
\end{equation}
for $1< p < q < \infty$, $1< s_k < p'$, $k=1,\ldots, m$, $1/q + m/p' = \sum 1/s_k$; then 
\begin{equation}\label{eq:HLS-b} 
\|F \|_{L^q (\real^n)} \le C \|H \|_{L^p (\real^{nm})}
\end{equation}
\end{thm}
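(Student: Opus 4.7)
The plan is to follow the inductive scheme of Theorem~\ref{thm:multilinear} almost verbatim, substituting the classical Hardy--Littlewood--Sobolev inequality
\[
\bigl\||\cdot|^{-n/s} * f\bigr\|_{L^{p'}(\real^n)} \le C_{s,p'}\|f\|_{L^r(\real^n)},\qquad \frac1{p'}=\frac1r+\frac1s-1,\quad 1<r<p'<\infty,
\]
for each use of Young's convolution inequality. Because the numerical conditions on $(p,q,\{s_k\})$ in Theorem~\ref{thm:HLS} coincide with those in Theorem~\ref{thm:multilinear}, the exponent bookkeeping carries over unchanged; only the strict-interior conditions required by HLS need to be re-confirmed.

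First I would dualize exactly as in \eqref{eq:dualprob}, so that the claim becomes
\[
\bigg[\int_{\real^{nm}}\Big|\int_{\real^n}\prod_k|x-y_k|^{-\lambda_k}h(x)\,dx\Big|^{p'}dy\bigg]^{1/p'} \le C\,\|h\|_{L^{q'}(\real^n)}.
\]
Order the exponents so that $s_1\le\cdots\le s_m$, set $w(x)=\prod_{k\ge 2}|x-y_k|^{-\lambda_k}h(x)$, and pick $r$ by $1/s_1+1/r-1=1/p'$. Applying HLS in the $y_1$-variable to the innermost integral produces
\[
\bigg[\int\Big|\int|x-y_1|^{-\lambda_1}w(x)\,dx\Big|^{p'}dy_1\bigg]^{1/p'} \le C\,\|w\|_{L^r(\real^n)},
\]
which plays precisely the role that Young's estimate played in the proof of Theorem~\ref{thm:multilinear}.

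After raising to the $p'$-th power and integrating in the remaining $y$-variables, set $u_k(x)=|x|^{-rn/s_k}=|x|^{-n/t_k}$ with $t_k=s_k/r$, and $v=|h|^r$. The remaining expression
\[
\bigg[\int_{\real^{(m-1)n}}\Big|\int\prod_{k\ge 2}u_k(x-y_k)\,v(x)\,dx\Big|^{p'/r}dy\bigg]^{r/p'\cdot 1/r}
\]
is a multilinear HLS integral on $(m-1)$ copies of $\real^n$ with parameters $\alpha=p'/r$, $\sigma=q/r$ and exponents $\{t_k\}_{k\ge 2}$. Using $1/r=1/p'+1/s_1'$, the $m$-level scaling identity $m/p'+1/q=\sum 1/s_k$ converts to the $(m-1)$-level identity $(m-1)/\alpha+1/\sigma=\sum_{k\ge 2}1/t_k$, so the inductive hypothesis applies. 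Iterating $m-2$ times reduces the estimate to the case $m=2$, which I would handle by mirroring the argument of Theorem~\ref{thm:extension}, again replacing both uses of Young by HLS under the same numerical relation between exponents.

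The main obstacle is the parameter bookkeeping: at every reduction level one must confirm that the HLS hypotheses are strictly interior. Concretely, one needs $1<r<s_2\le\cdots\le s_m<p'$ and $1<p'/r<\infty$ so that $\lambda_1=n/s_1\in(0,n)$ and every rescaled exponent $r\lambda_k=n/t_k$ also lies in $(0,n)$, and one needs the updated triple $(\alpha,\sigma,\{t_k\})$ to satisfy the hypothesis of the theorem at level $m-1$. These strict inequalities are exactly those verified for the proof of Theorem~\ref{thm:multilinear}, so once they are re-checked, inequality \eqref{eq:HLS-b} follows with some finite constant $C$.
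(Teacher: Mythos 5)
Your route is exactly the paper's: the published proof of Theorem~\ref{thm:HLS} consists precisely of dualizing and rerunning the induction of Theorems~\ref{thm:extension} and \ref{thm:multilinear} with Young's inequality replaced at each step by the standard Riesz-potential (Hardy--Littlewood--Sobolev) bound, which is what you propose, and the interior conditions you list ($1<r<s_2\le\cdots\le s_m<p'$, hence $r\lambda_k=n/t_k\in(0,n)$, and $r<p'$ so the one-variable HLS step is legitimate) are the same ones checked in the proof of Theorem~\ref{thm:multilinear}. One bookkeeping slip needs fixing: at the reduction step the exponent attached to $v=|h|^r$ must be the rescaled \emph{dual} exponent, i.e.\ $\sigma'=q'/r$ (so $\sigma=(q'/r)'$), not $\sigma=q/r$. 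As written, your identity $(m-1)/\alpha+1/\sigma=\sum_{k\ge2}1/t_k$ with $\alpha=p'/r$, $\sigma=q/r$, $t_k=s_k/r$ is equivalent to $1/s_1=1/p'$, which contradicts $s_1<p'$; moreover $\|v\|_{L^{(q/r)'}}$ does not collapse to $\|h\|_{L^{q'}}^r$, so the inductive hypothesis would not apply. With $\sigma'=q'/r$ the identity reduces exactly to the defining relation $1/r=1/p'+1/s_1'$, one has $\|v\|_{L^{\sigma'}}=\|h\|_{L^{q'}}^r$, and the level-$(m-1)$ strictness $\alpha'<\sigma$ is again just $p<q$; with that correction the induction closes and your argument coincides with the paper's.
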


\begin{proof}
Look at the dual inequality and use the same induction argument as for 
Theorems~\ref{thm:extension} and \ref{thm:multilinear} 
while applying the standard inequality for bounds for the Riesz potentials on $\real^n$.
\end{proof}

\begin{Cor}
Let 
$$F(x) = \int_{\real^{nm}} \prod g_k (x-y_k) H (y_1,\ldots,y_m)\,dy$$
for $1 < p < q < \infty$, $1< s_k < p'$, $k=1,\ldots,m$ and $1/q + m/q' = \sum 1/s_k$; then 
$$\|F\|_{L^q (\real^n)} \le C \prod \|g_k\|_{L_{s_k,\infty} (\real^n)} 
\|H\|_{L^p (\real^{nm})}$$
\end{Cor}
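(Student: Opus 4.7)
The plan is to run the same inductive reduction used for Theorem~\ref{thm:multilinear} and Theorem~\ref{thm:HLS}, replacing each application of Young's convolution inequality with its weak-type Lorentz-space counterpart. The essential tool is the following version of O'Neil's weak-type Young inequality: for $g \in L^{s,\infty}(\real^n)$ with $1<s<\infty$ and $1<a,b<\infty$ satisfying $1/b+1=1/s+1/a$,
$$\|g * f\|_{L^b(\real^n)} \le C \|g\|_{L^{s,\infty}(\real^n)} \|f\|_{L^a(\real^n)}.$$
This is precisely the ingredient that licenses Theorem~\ref{thm:HLS}, since the Riesz kernel $|x|^{-n/s}$ belongs to $L^{s,\infty}(\real^n)$; the corollary then asks for the same conclusion with $|x|^{-n/s_k}$ replaced by an arbitrary $g_k \in L^{s_k,\infty}(\real^n)$.

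First I would pass to the dual statement
$$\bigg[\int_{\real^{nm}} \Big|\int_{\real^n} \prod g_k(x-y_k)\,h(x)\,dx\Big|^{p'} dy\bigg]^{1/p'}
\le C \prod \|g_k\|_{L^{s_k,\infty}(\real^n)} \|h\|_{L^{q'}(\real^n)},$$
parallel to \eqref{eq:dualprob}. Ordering $1 < s_1 \le \cdots \le s_m < p'$, I set $w(x) = \prod_{k \ge 2} g_k(x-y_k)\,h(x)$ and isolate the $y_1$-integration exactly as in the proof of Theorem~\ref{thm:multilinear}. Applying the weak-type Young inequality to $\tilde g_1 * w$ bounds the $y_1$-integral by $\|g_1\|_{L^{s_1,\infty}(\real^n)}\,\|w\|_{L^r(\real^n)}$ with $1/r = 1/p' + 1/s_1'$; the verification that $1<r<s_2\le s_k<p'$ is purely arithmetic in the exponents and carries over verbatim from the earlier proof.

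Next, raising $\|w\|_{L^r}$ to the power $p'$ and integrating over the remaining $y_k$-variables, I would substitute $u_k = |g_k|^r$, $v = |h|^r$ and exploit the homogeneity $\big\| |g_k|^r\big\|_{L^{s_k/r,\infty}} = \|g_k\|_{L^{s_k,\infty}}^r$. This recasts the bound as an $(m-1)$-fold inequality in the new variables with exponents $(t_k,\alpha,\sigma) = (s_k/r,\, p'/r,\, q'/r)$ satisfying the corresponding $(m-1)$-level scaling constraint. Iterating this descent down to $m=2$ and closing with the weak-type analogue of the argument in Theorem~\ref{thm:extension} completes the proof.

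The main obstacle I anticipate is the bookkeeping for the exponents at each descent step: one must confirm that $s_k/r > 1$ so the weak-Young inequality is applicable, verify that the new dual exponent strictly dominates the new $t_k$'s (so the strict inequalities $t_k < \alpha$ survive), and check that the scaling identity transforms cleanly from the $m$-level condition to the $(m-1)$-level condition. A secondary observation is that O'Neil's inequality carries a non-sharp constant strictly larger than $1$, so the $C \le 1$ improvement available in Theorems~\ref{thm:multilinear} and~\ref{thm:extension} is lost here --- consistent with the absence of such a claim in the corollary's statement.
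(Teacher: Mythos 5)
Your proposal is correct, and its skeleton is exactly the paper's: return to the inductive descent of Theorem~\ref{thm:multilinear}, peel off one $y_k$-integration at a time, and carry the weak norms through via $\| |g_k|^r\|_{L_{s_k/r,\infty}} = \|g_k\|^r_{L_{s_k,\infty}}$. The only real divergence is how the single-step estimate $\big[\int |\int g_1(x-y_1)w(x)\,dx|^{p'}dy_1\big]^{1/p'} \le C\|g_1\|_{L_{s_1,\infty}}\|w\|_{L^r}$ is justified: you cite O'Neil's weak-type Young inequality as a black box, whereas the paper derives it from ingredients already on hand --- first replacing $g_1,w$ by their radial decreasing rearrangements (a Brascamp--Lieb--Luttinger/Riesz-type improvement of the $y_1$-norm), then using the pointwise domination $g_1^*(x) \le C\|g_1\|_{L_{s_1,\infty}}|x|^{-n/s_1}$ to reduce to the Riesz kernel, and finally invoking the Riesz-potential bound of Theorem~\ref{thm:HLS}, with $\|w^*\|_{L^r}=\|w\|_{L^r}$ allowing the induction to continue. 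The two routes are equivalent in content (the paper's three-line chain is in effect a proof of the weak-type Young inequality), so yours trades self-containedness for brevity; your remarks that the exponent arithmetic $1<r<s_2\le\cdots<p'$ carries over verbatim and that sharpness of the constant is lost are both consistent with the paper. One small point: you implicitly read the scaling condition as $1/q+m/p'=\sum 1/s_k$, as in Theorems~\ref{thm:extension} and \ref{thm:multilinear}; the $m/q'$ in the corollary's statement is evidently a typo, and your reading is the one under which the descent (and the paper's own proof) makes sense.
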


\begin{proof} 
Return to the proof of Theorem~\ref{thm:multilinear}, and the inductive step 
\begin{equation*}
\begin{split}
&\bigg[ \int\Big| \int_{\real^n} \prod g_k (x-y_k) h (x) \,dx\Big|^{p'} dy_1\bigg]^{1/p'}
= \bigg[ \int \Big| \int_{\real^n} \mkern-12mu
g_1 (x-y_1) w(x)\,dx \Big|^{p'} dy_1\bigg]^{1/p'}\\
\noalign{\vskip6pt}
&\quad 
\le \bigg[ \int \Big| \int_{\real^n}\mkern-12mu  
 g_1^* (x-y_1) w^* (x)\,dx\Big|^{p'} dy_1\bigg]^{1/p'} \mkern-8mu
\le \|g_1 \|_{L_{s_1,\infty}(\real^n)} \bigg[ \int \Big| \! \int_{\real^n} \mkern-12mu
|x-y_1|^{-\lambda_1} w^* (x)dx\Big|^{p'}
dy_1\bigg]^{1/p'}\\
\noalign{\vskip6pt}
&\quad
\le C\|g_1 \|_{L_{s_1,\infty}} \| w^* \|_{L^r (\real^n)} 
= C \|g_1\|_{L_{s_1,\infty}(\real^n)} \|w\|_{L^r (\real^n)}
\end{split}
\end{equation*}
Repeat this inductive step for $g_2$ and continue the iterative steps.
Here $g^*$ denotes the non-negative equimeasurable radial decreasing rearrangement of $g$,
and 
$$g^* (x) \le C\| g\|_{L_{s,\infty }(\real^n)} |x|^{-n/s}$$
for $g$ in the weak-Lebesgue space $L_{s,\infty} (\real^n)$.
\end{proof}

%%%%%%%%
\section{Diagonal trace restriction for Hardy's inequality}

The multilinear convolution structure described above can be applied to Hardy's inequality:
\begin{gather}
(Tf (x) = \frac1x \int_0^x f(t)\,dt\ ,\qquad x>0\nonumber\\
\noalign{\vskip6pt}
\|Tf\|_{L^p(0,\infty)} < \frac{p}{p-1} \|f\|_{L^p(0,\infty)} \ ,\qquad 1< p<\infty \label{sec2-eq6}\\
\noalign{\vskip6pt}
\|x^{1/r} Tf\|_{L^q (0,\infty) }
\le C_{p,q} \|f\|_{L^p (0,\infty)}\ ,\qquad 1<p<q<\infty\label{sec2-eq7}\\
\noalign{\vskip6pt}
\frac1r = \frac1p - \frac1q\ ,\quad C_{p,q} = (p'/q)^{1/q} 
%\left[ \frac{ q\gamma  \Gamma (1/\gamma)}{\Gamma (1/q\gamma) \Gamma (1/q'\gamma)}\right]^\gamma
\left[ \frac{ q/r \   \Gamma (r)}{\Gamma (r/q ) \Gamma (r/q')}\right]^{1/r}  
\nonumber
\end{gather}
The constants are best possible (see \cite{H, HL, Bliss, HLP}). 
Equality is attained for functions of the form 
$$f(t) = A (1+ct^{q/r})^{-q/(q-p)}$$
These operators extend by tensor product to the manifold $  [0,\infty)^m$: $T\otimes T\otimes 
\cdots \otimes T$ acts on functions defined on $[0,\infty)^m$ and $S\otimes S\otimes \cdots\otimes S$
with $S = x^{1/r} T$ similarly acts on such functions. 
By iteration one obtains:

\begin{thm}\label{thm3}
For $1<p<q<\infty$
\begin{gather}
\|(T\otimes\cdots\otimes T) H\|_{L^p [0,\infty)^m} < \left( \frac{p}{p-1}\right)^m \|H\|_{L^p [0,\infty)^m}
\label{sec2-eq8}\\
\noalign{\vskip6pt}
\|(S\otimes\cdots\otimes S) H\|_{L^q [0,\infty)^m} \le [C_{p,q}]^m \| H\|_{L^p [0,\infty)^m}
\label{sec2-eq9}
\end{gather}
\end{thm}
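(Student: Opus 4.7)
The plan is to factor each tensor product as the composition of the one-variable operators acting in distinct slots, and then apply the sharp one-dimensional estimates \eqref{sec2-eq6} and \eqref{sec2-eq7} variable by variable. Writing $T_k$ (respectively $S_k$) for the operator $T$ (respectively $S$) acting only in the $k$-th coordinate, we have $T \otimes \cdots \otimes T = T_1 T_2 \cdots T_m$ and $S \otimes \cdots \otimes S = S_1 S_2 \cdots S_m$, with the factors pairwise commuting because they act in different variables.

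For the unweighted bound \eqref{sec2-eq8}, input and output use the same exponent $p$, so Fubini allows the $L^p$ norm on $[0,\infty)^m$ to be computed as an iterated integral. Applying \eqref{sec2-eq6} in $x_1$ for almost every fixed $(x_2,\ldots,x_m)$ contributes a factor $p/(p-1)$, and iterating this step in the remaining $m-1$ slots produces the product constant $\bigl(p/(p-1)\bigr)^m$. The strict inequality in \eqref{sec2-eq6} propagates through the iteration since equality in the one-dimensional bound can occur only for the zero operand.

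For the mixed-exponent estimate \eqref{sec2-eq9} the input and output exponents differ, and the integrations cannot simply be pulled out. Proceed by induction on $m$, the case $m=1$ being \eqref{sec2-eq7}. For the inductive step, Fubini permits placing the $L^q$ integration in $x_m$ innermost, whereupon \eqref{sec2-eq7} applied in $x_m$ yields
\begin{equation*}
\|S_1 \cdots S_m H\|_{L^q([0,\infty)^m)} \le C_{p,q}\, \bigl\| \|(S_1 \cdots S_{m-1})H\|_{L^p_{x_m}} \bigr\|_{L^q_{x_1,\ldots,x_{m-1}}}.
\end{equation*}
The key maneuver is then Minkowski's integral inequality in its mixed-norm form
\begin{equation*}
\bigl\| \|f\|_{L^p_{x_m}} \bigr\|_{L^q_{x_1,\ldots,x_{m-1}}} \le \bigl\| \|f\|_{L^q_{x_1,\ldots,x_{m-1}}} \bigr\|_{L^p_{x_m}}, \qquad p \le q,
\end{equation*}
which runs in the required direction precisely because $p < q$. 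After this swap, the inner norm for each fixed $x_m$ is the $L^q$ norm of the tensor-product image $(S_1 \cdots S_{m-1})H(\,\cdot\,,\ldots,\,\cdot\,,x_m)$ in the remaining $m-1$ variables, to which the inductive hypothesis applies and contributes a factor $[C_{p,q}]^{m-1}$. Fubini then collapses the iterated $L^p$ norm in the $x_k$'s back to $\|H\|_{L^p([0,\infty)^m)}$, giving the total constant $[C_{p,q}]^m$.

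The main technical obstacle is organizing this mixed-norm bookkeeping so that the structure appearing after one reduction is exactly what the inductive hypothesis requires; the hypothesis $p<q$ is what makes Minkowski's integral inequality run in the required direction, and there is no iterative improvement of the constant because $[C_{p,q}]^m$ is merely the natural tensor-product bound obtained by pairing each use of \eqref{sec2-eq7} with one swap of mixed norms.
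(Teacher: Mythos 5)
Your proof is correct and follows essentially the same route as the paper, which simply invokes iteration of the one-dimensional estimates \eqref{sec2-eq6} and \eqref{sec2-eq7} (citing the tensor-product argument of \cite{Beckner-Annals75}); your induction with the generalized Minkowski mixed-norm swap, valid precisely because $p<q$, is exactly the standard content of that iteration, spelled out in detail. The only item you leave aside is the paper's remark that the constants are best possible (seen from product functions), but that is not part of the stated inequalities.
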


\begin{proof}
This extension to higher dimensions is obtained by iteration of one-dimensional estimates (see
\cite{Beckner-Annals75}). 
The constants are best possible from considering product functions.
\end{proof}

Let $\Lambda_m$ denote the first quadrant in $\real^m$; namely, $[0,\infty)^m$. 
For a non-negative function $f$ defined on $\Lambda_m$, let $f_{\#}$ denote the 
equimeasurable rearrangement of $f$ as a radial decreasing function on $\Lambda_m$. 
Then 

\begin{Lem} 
\begin{equation}\label{eq:sqr} 
\int_{\Lambda_m} f(x) g(x) \,dx \le \int_{\Lambda_m} f_{\#} (x) g_{\#} (x)\,dx 
\end{equation}
\end{Lem}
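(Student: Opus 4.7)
The plan is to prove this by the standard layer-cake / bathtub argument, adapted from the usual Hardy--Littlewood rearrangement inequality on $\real^m$ to the first quadrant $\Lambda_m$. The only point that needs care is that the level sets of $f_\#$ and $g_\#$ are centered \emph{spherical caps} (balls intersected with $\Lambda_m$) rather than full balls, but these are still nested, which is what the argument actually requires.

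First I would write each non-negative function as a superposition of characteristic functions of its super-level sets: $f(x)=\int_0^\infty \chi_{E_s}(x)\,ds$ with $E_s=\{x\in\Lambda_m: f(x)>s\}$, and similarly $g(x)=\int_0^\infty \chi_{F_t}(x)\,dt$ with $F_t=\{x\in\Lambda_m: g(x)>t\}$. Applying Fubini's theorem,
\begin{equation*}
\int_{\Lambda_m} f(x)g(x)\,dx = \int_0^\infty\!\!\int_0^\infty |E_s\cap F_t|\,ds\,dt,
\end{equation*}
and the analogous identity for $f_\#$ and $g_\#$ with the super-level sets $E_s^\#=\{f_\#>s\}$ and $F_t^\#=\{g_\#>t\}$. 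By the defining property of the rearrangement on $\Lambda_m$, $|E_s^\#|=|E_s|$ and $|F_t^\#|=|F_t|$.

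Next I would reduce everything to the characteristic-function case: it suffices to show
\begin{equation*}
|E_s\cap F_t|\le |E_s^\#\cap F_t^\#|
\end{equation*}
for every $s,t>0$. The trivial upper bound $|E_s\cap F_t|\le \min(|E_s|,|F_t|)$ is general. On the other hand, $E_s^\#$ and $F_t^\#$ are of the form $B_{\rho}\cap\Lambda_m$ for some radii depending on $s$ and $t$; since two such spherical caps are nested (the smaller radius determines the smaller set), we have the matching equality
\begin{equation*}
|E_s^\#\cap F_t^\#|=\min(|E_s^\#|,|F_t^\#|)=\min(|E_s|,|F_t|).
\end{equation*}
Combining these two facts proves the pointwise bound on the super-level set intersections, and then integrating in $s$ and $t$ gives the lemma.

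The only real obstacle is setting up the rearrangement $f\mapsto f_\#$ correctly on $\Lambda_m$. One must verify that a radial decreasing function on $\Lambda_m$ is uniquely determined by the volume function $\rho\mapsto |\{f>s\}|$ via the relation $c_m\rho^m=|\{f_\#>s\}|$, where $c_m=2^{-m}\,\mathrm{vol}(B_1\subset\real^m)$ is the volume of the unit ball intersected with the first quadrant. Once this identification is in place, the nesting of super-level sets is automatic and the remainder of the argument is identical to the classical proof on $\real^m$.
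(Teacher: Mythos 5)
Your argument is correct, but it is not the route the paper takes. The paper's proof is a two-line reduction: extend $f$ and $g$ from $\Lambda_m$ to all of $\real^m$ by copying them to the $2^m$ equivalent sectors, observe that $\int_{\Lambda_m} fg\,dx = 2^{-m}\int_{\real^m} fg\,dx$, apply the classical Hardy--Littlewood rearrangement inequality $\int_{\real^m} fg \le \int_{\real^m} f^*g^*$ to the symmetrized functions, and note that the rearrangement of the extension restricts back to $f_\#$ on $\Lambda_m$, giving $2^{-m}\int_{\real^m} f^*g^* = \int_{\Lambda_m} f_\# g_\#$. You instead reprove the Hardy--Littlewood mechanism directly on the sector via the layer-cake representation, Fubini, the trivial bound $|E_s\cap F_t|\le\min(|E_s|,|F_t|)$, and the nestedness of the rearranged level sets $B_\rho\cap\Lambda_m$; all the ingredients you invoke (equimeasurability $|E_s^\#|=|E_s|$, nesting of centered caps) are sound, so the proof stands. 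The trade-off: the paper's reflection argument is shorter, quotes the classical inequality as a black box, and directly motivates its follow-up remark that the same conclusion holds for any sector that can be replicated to pave $\real^m$; your argument is self-contained and in fact slightly more general, since the nesting of the sets $B_\rho\cap\Lambda_m$ does not require the cone to tile $\real^m$ under reflections, only that the rearrangement be taken with respect to a nested family of dilates of a fixed set intersected with the cone.
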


\begin{proof}
Extend $f$ and $g$ to all of $\real^m$ by copying the functions to all equivalent sectors; 
then 
$$\int_{\Lambda_m} f(x) g(x) \,dx
= 2^{-m} \int_{\real^m} f(x) g(x) \,dx  
\le 2^{-m} \int_{\real^m} f^* (x) g^* (x)\,dx 
= \int_{\Lambda_m} f_{\#} (x) g_{\#} (x)\,dx $$
\end{proof}

Such a rearrangement is optimal for any sector in $\real^m$ having vertex at the origin which you can 
replicate to pave $\real^m$.

\begin{thm}\label{thm:rearrangement}
For $1< p \le q < \infty$ and $\gamma = m/p - 1/q$ with $H\in L^p (\Lambda_m)$ and 
$\psi (x) \break = x^\gamma [(T\otimes\cdots \otimes T) H ](x,\ldots,x)$
\begin{equation}\label{eq:rearrange}
\|\psi \|_{L^q [0,\infty)} \le C \|H\|_{L^p (\Lambda_m)}
\end{equation}
\end{thm}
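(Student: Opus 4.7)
The plan is to reduce the inequality to the multilinear Hardy--Littlewood--Sobolev estimate (Theorem~\ref{thm:HLS}) by dominating the cutoff kernel in the tensor-product Hardy operator by a product of Riesz potentials. Assuming without loss of generality that $H\ge 0$, we have
\[
\psi(x) = x^{\gamma - m} \int_{[0,x]^m} H(y_1,\dots,y_m)\,dy.
\]
The elementary pointwise bound
\[
\chi_{[0,x]}(y_k) \le x^{\alpha}\,|x - y_k|^{-\alpha}, \qquad y_k \ge 0,\ \alpha > 0,
\]
is trivial outside $[0,x]$ and follows from $|x-y_k| \le x$ inside. Multiplying these over $k$ and inserting into the formula for $\psi$ yields
\[
\psi(x) \le x^{\gamma - m + m\alpha} \int_{\Lambda_m} \prod_{k=1}^{m} |x - y_k|^{-\alpha}\,H(y)\,dy.
\]

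Next I would choose $\alpha$ to kill the $x$-prefactor: $\alpha = (m-\gamma)/m = 1/p' + 1/(mq)$. With $\lambda_k = \alpha$ and $s_k = 1/\alpha$, the right side becomes exactly the multilinear Riesz potential of Theorem~\ref{thm:HLS}, and the scaling balance $1/q + m/p' = \sum 1/s_k = m\alpha$ is automatic from the definition of $\gamma$ --- this is no coincidence, but reflects that the weight $x^\gamma$ was prescribed by the same dilation invariance. The range condition $1 < s_k < p'$ reduces to $p < mq$ (for $s_k > 1$) and to $1/(mq) > 0$ (for $s_k < p'$), both of which hold throughout the strict range $1 < p < q < \infty$, $m \ge 1$. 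Extending $H$ by zero from $\Lambda_m$ to $\real^m$ preserves its $L^p$-norm, and Theorem~\ref{thm:HLS} delivers $\|\psi\|_{L^q(0,\infty)} \le C \|H\|_{L^p(\Lambda_m)}$.

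The endpoint $p = q$ falls outside the scope of Theorem~\ref{thm:HLS} (and indeed gives $s_k = 1$ when $m=1$), so it would require a separate argument. The natural substitute is a one-dimensional reduction: use the rearrangement inequality \eqref{eq:sqr} to replace $H$ by its radial decreasing rearrangement $H_\#(y) = h(|y|)$ on $\Lambda_m$, dominate $\int_{[0,x]^m} H \le \int_{B_{Rx}\cap\Lambda_m} H_\#$ (the cube and quarter-ball having equal measure for the appropriate constant $R$), and reduce to the weighted Hardy inequality
\[
\int_0^\infty x^{q(\gamma - m)}\Big(\int_0^x h(r)\,r^{m-1}\,dr\Big)^{q} dx \le C\,\|h\,r^{(m-1)/p}\|_{L^p(dr)}^{q}.
\]
The Muckenhoupt--Bradley weight condition for this inequality is a pure power $t^0$, independent of $t$, precisely because $\gamma = m/p - 1/q$ makes the two integral factors scale oppositely. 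The main obstacle in either route is identical: the bookkeeping that verifies the critical homogeneity prescribed by $\gamma$ saturates the relevant balance condition (HLS constraint or Muckenhoupt integral); once this matching is confirmed the remainder is routine.
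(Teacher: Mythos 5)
Your argument is correct, but your main route differs from the paper's. For $1<p<q$ you dominate the cube indicator pointwise, $\chi_{[0,x]}(y_k)\le x^{\alpha}|x-y_k|^{-\alpha}$ with $\alpha=1/p'+1/(mq)$ chosen to cancel the prefactor $x^{\gamma-m}$, and then invoke the multilinear Hardy--Littlewood--Sobolev inequality (Theorem~\ref{thm:HLS}) with $s_k=1/\alpha$; your verification that $\sum 1/s_k=m/p'+1/q$ and $1<s_k<p'$ (i.e.\ $p<mq$) is accurate, so this cleanly yields \eqref{eq:rearrange} in the strict range without any rearrangement. The paper instead argues once for the whole range $1<p\le q$: it uses the sector rearrangement lemma \eqref{eq:sqr} to replace $H$ by its radial decreasing rearrangement, dominates the cube by the quarter-ball of equal measure, and changes variables $v=|t|^m$ to land exactly on the one-dimensional weighted Hardy inequality \eqref{eq:weighted}, i.e.\ \eqref{sec2-eq7} (and the classical Hardy bound \eqref{sec2-eq6} at $p=q$). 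What each buys: your HLS reduction is quick and avoids symmetrization, but it excludes the endpoint $p=q$ (where the Riesz-kernel domination is genuinely too lossy, since Riesz potentials are unbounded on $L^p$) and gives no information on the constant, whereas the paper's reduction covers $p=q$ automatically and ties $C$ to the sharp one-dimensional constant $C_{p,q}$, which is what is exploited later in Theorem~\ref{thm:independent}. Your endpoint fallback is in fact the paper's proof; it works, and your Muckenhoupt--Bradley check does come out as a pure power (the two factors scale as $t^{-m/p'}$ and $t^{m/p'}$), though it would be tidier, and self-contained within the paper, to note that the substitution $v=r^m$ sends your one-dimensional inequality directly to \eqref{eq:weighted} rather than appealing to an external weight characterization. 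Since that second route already covers all of $1<p\le q$, the HLS detour is an alternative proof of the subcritical case rather than a necessary ingredient.
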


\begin{proof} 
By applying the rearrangement lemma for reduction to radial functions and rescaling, 
a simple bound for this diagonal trace restriction inequality is obtained from the weighted 
Hardy inequality \eqref{sec2-eq7}. 
\begin{equation}\label{eq:simplebound}
\psi (x) \le x^{-m/p' - 1/q} \int_{B_x\cap \Lambda_m} f^* (t) \,dt 
= 2^{-m} \text{vol}(B) \int_0^{ax} f^* (u) u^{m-1}\,du
\end{equation}
where $u = |t|$ and $B_x$ is a ball centered at the origin with $\text{vol}(B_x) = 2^m x^m$;
that is, the ball has radius $ax$ so that $a^m = 2^m/\text{vol}(B)$ with $g(v) = f^* (av^{1/m})$, 
$v = |t|^m$ and $B$ being the unit bball of radius~1.
Then after rescaling to remove the factor $a$, changing variables from $t\in \Lambda_m$ to 
$v= |t|^m$, and clearing away factors of spherical volume, inequality \eqref{eq:rearrange} is 
equivalent to the weighted one-dimensional Hardy inequality 
\begin{equation}\label{eq:weighted} 
\bigg[ \int_0^\infty \Big( x^{-1/p' - 1/q} \int_0^x g(v)\,dv \Big)^q \,dx\bigg]^{1/q}
\le C_{p,q} \bigg[ \int_0^\infty |g(v)|^p \,dv\bigg]^{1/p}
\end{equation}
\end{proof}

To have a better sense of the method used in this argument, it will be useful to look at an 
$n$-dimensional variant of Hardy's inequality which was suggested in \cite{CG95}. 
On $\real^n$ let 
$$W(x) = \frac1{\text{vol}(B)} \frac1{x^n} \int_{|t|<x} f(t)\,dt\ ,\qquad x>0$$
where now $W(x)$ is considered as a function on $\real^n$ with $x$ being the radial 
variable and $B$ is the unit ball in $\real^n$.

\begin{thm}\label{thm:optimal}
For $1 < p \le q <\infty$ and $\gamma = n/p - n/q = n/r$
\begin{equation}\label{eq:optimal} 
\|x^\gamma W\|_{L^q (\real^n)} \le C\|f\|_{L^p (\real^n) }
\end{equation} 
The optimal constant is given by $(\text{\rm vol}(B)]^{-1/r} C_{p,q}$ 
(see inequality \eqref{sec2-eq7}).
\end{thm}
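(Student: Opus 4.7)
The plan is to reduce the $n$-dimensional inequality to the one-dimensional weighted Hardy inequality \eqref{sec2-eq7} via rearrangement and a radial change of variables, mirroring the strategy of Theorem~\ref{thm:rearrangement}.

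First I would symmetrize: by the Hardy-Littlewood rearrangement inequality, $\int_{|t|<x} f(t)\,dt \le \int_{|t|<x} f^*(t)\,dt$, so replacing $f$ by $f^*$ only increases $W$ pointwise (and leaves the right-hand side of \eqref{eq:optimal} unchanged). This reduces to the case of non-negative radial decreasing $f$. In that case the spherical integration gives
\begin{equation*}
W(x) \;=\; \frac{n}{x^{n}} \int_0^x f(s)\, s^{n-1}\, ds.
\end{equation*}

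Next I would perform the change of variables $v = s^n$, setting $g(v) = f(v^{1/n})$. This collapses the $n$-dimensional average into a one-dimensional one: with $y = x^n$,
\begin{equation*}
W(y^{1/n}) \;=\; \frac{1}{y}\int_0^y g(v)\,dv \;=\; (Tg)(y).
\end{equation*}
Converting both norms to radial form and applying the same substitution $y = x^n$ yields
\begin{equation*}
\|x^\gamma W\|_{L^q(\real^n)} = [\text{vol}(B)]^{1/q}\,\|y^{1/r}\, Tg\|_{L^q(0,\infty)},
\qquad
\|f\|_{L^p(\real^n)} = [\text{vol}(B)]^{1/p}\,\|g\|_{L^p(0,\infty)},
\end{equation*}
where the identity $\gamma = n/r$ is precisely what is needed to match the weight exponents after the substitution.

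Now apply the sharp one-dimensional weighted Hardy inequality \eqref{sec2-eq7} to $g$ with constant $C_{p,q}$. Combining the three displays gives
\begin{equation*}
\|x^\gamma W\|_{L^q(\real^n)} \;\le\; [\text{vol}(B)]^{1/q - 1/p}\, C_{p,q}\, \|f\|_{L^p(\real^n)}
\;=\; [\text{vol}(B)]^{-1/r}\, C_{p,q}\, \|f\|_{L^p(\real^n)},
\end{equation*}
which is the asserted inequality with the claimed constant. Sharpness follows by reversing the correspondence: the 1D extremizer $g(v) = A(1+cv^{q/r})^{-q/(q-p)}$ for \eqref{sec2-eq7} pulls back via $f(s) = g(s^n)$ to a radial extremizer in $\real^n$, with equality preserved throughout the chain of identities. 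The only non-routine step is verifying that the rearrangement reduction does not destroy optimality, but this is immediate because the candidate extremizer is already radial decreasing. The bookkeeping of the $\text{vol}(B)$ factors is the main place where arithmetic care is needed, but the exponents $1/q - 1/p = -1/r$ line up exactly.
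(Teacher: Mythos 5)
Your argument is correct and is essentially the paper's proof: the paper's one-line sketch (``observe $W(x,f)\le W(x,f^*)$ and choose new variables in terms of $|t|^n$; then simplify'') is exactly your rearrangement step followed by the substitution $v=s^n$ reducing to the weighted one-dimensional Hardy inequality \eqref{sec2-eq7}, and your bookkeeping of the $\mathrm{vol}(B)$ factors and the transfer of the extremizer are accurate. The only cosmetic caveat is the endpoint $p=q$, where $C_{p,p}=p/(p-1)$ is sharp but not attained, so ``optimality'' there is obtained via approximate extremizers rather than the explicit profile; this does not affect the claim.
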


\begin{proof} 
Observe that $W(x,f) \le W(x,f^*)$ and choose new variables in terms of $|t|^n$; then simplify to 
obtain \eqref{eq:optimal}.
\end{proof}

Because of dilation invariance and reduction to one dimensional estimates, inequalities 
in this section have equivalent representations as convolution inequalities on the 
multiplicative group $\real_+$ where Haar measure is given by $1/t\  dt$ and convolution 
on the group is realized as 
$$(g*h) (x) =  \int_G g(y) h(y^{-1} x)\,dy$$
with $dy$ denoting Haar measure on $G$.
Then bounds are obtained using Young's inequality though for $p\ne q$ the constants will not be 
optimal. 

\begin{thm}\label{thm:inequality}
For $1< p \le q <\infty$ with $1/r = 1/p - 1/q$ the following two inequalities are equivalent:
$$
\|x^{1/r} Tf\|_{L^q (0,\infty)} 
\le C_{p,q} \|f\|_{L^p (0,\infty)} 
\eqno{\eqref{sec2-eq7}}$$
and 
\begin{equation}\label{eq:inequality2}
\| g * h \|_{L^q (\real_+)} 
\le %\Big( \frac{1+p'}q\Big)^{-1/s} 
C_{p,q}  %\|g\|_{L^s (\real_+)} 
\|h\|_{L^p (\real_+)}
\end{equation}
with %$1/s' = 1/r = 1/p - 1/q$ and 
$g(y) = y^{1/p'} \chi_{[0,1]} (y)$.
\end{thm}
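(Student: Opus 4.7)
The plan is to exhibit an isometric change of variables between $L^p(0,\infty)$ (with Lebesgue measure) and $L^p(\real_+)$ (with Haar measure $dx/x$) that intertwines the weighted Hardy operator $f \mapsto x^{1/r} Tf$ with the multiplicative convolution operator $h \mapsto g * h$. Once such an intertwining is in place, the two inequalities become literally the same inequality written in two coordinate systems, and hence hold with the same constant $C_{p,q}$.

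Concretely, I would set $h(x) := x^{-1/p} f(1/x)$. The substitution $y = 1/x$ yields
$$\|h\|_{L^p(\real_+)}^p = \int_0^\infty x^{-1}|h(x)|^p\,dx = \int_0^\infty x^{-2}|f(1/x)|^p\,dx = \|f\|_{L^p(0,\infty)}^p,$$
so $f \mapsto h$ is an isometry. Next, unfolding the definition of convolution on $\real_+$,
$$(g*h)(x) = \int_0^\infty g(y)\,h(y^{-1}x)\,\frac{dy}{y} = \int_0^1 y^{1/p'-1}\,h(x/y)\,dy.$$
Inserting $h(x/y) = (x/y)^{-1/p} f(y/x) = x^{-1/p} y^{1/p} f(y/x)$ collapses the $y$-weight to $y^{1/p'-1+1/p} = 1$, and then the substitution $s = y/x$ gives
$$(g*h)(x) = x^{-1/p}\int_0^1 f(y/x)\,dy = x^{1/p'}\int_0^{1/x} f(s)\,ds = x^{-1/p}\,(Tf)(1/x),$$
where the last equality uses $\int_0^t f(s)\,ds = t\,(Tf)(t)$ with $t = 1/x$.

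For the $L^q$ norms,
$$\|g*h\|_{L^q(\real_+)}^q = \int_0^\infty x^{-q/p-1}|(Tf)(1/x)|^q\,dx,$$
and after the substitution $y = 1/x$ this becomes $\int_0^\infty y^{q/p-1}|Tf(y)|^q\,dy$. The exponent identity $q/p - 1 = q(1/p - 1/q) = q/r$ then rewrites this as $\|x^{1/r} Tf\|_{L^q(0,\infty)}^q$. Combined with the $L^p$ isometry, this shows that the two displayed inequalities are literally the same inequality, in particular sharing the same constant $C_{p,q}$.

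There is no genuine obstacle in the argument; once the correct isometry (inversion paired with the power weight $x^{-1/p}$) is identified, everything reduces to careful bookkeeping. The more naive choice $h(x) = x^{1/p} f(x)$, which is also an $L^p$ isometry, would intertwine convolution by $g$ with the \emph{adjoint} Hardy operator (an integral from $x$ to $\infty$) rather than with $T$ itself, so the inversion $x \mapsto 1/x$ is essential for the statement exactly as displayed.
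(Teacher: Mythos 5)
Your proposal is correct and is essentially the paper's own argument: the paper's one-line proof sets $f(t)=t^{-1/p}h(1/t)$ and simplifies, which is exactly the inverse of your substitution $h(x)=x^{-1/p}f(1/x)$, and your detailed bookkeeping (the $L^p$ isometry, the collapse of the $y$-weight, and the exponent identity $q/p-1=q/r$) fills in precisely the "simplify terms" step.
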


\begin{proof} 
In \eqref{sec2-eq7} 
set $f(t) = t^{-1/p} h (1/t)$ and simplify terms to obtain 
\eqref{eq:inequality2}.
\end{proof}

The sharp constants for convolution on the multiplicative group $\real_+$ transfer directly from 
the additive translation group $\real$ (see \cite{Beckner-Annals75}. 

\begin{Lem}[Young's inequality on $\real_t$] 
For $1 < p\le q < \infty$ and $1/q = 1/p - 1/s'$
\begin{equation}\label{eq:lem-Young} 
\| f * g\|_{L^q (\real_t)} \le D_{p,q} \|f\|_{L^p (\real_t)} \|g\|_{L^s (\real_t)}
\end{equation}
\end{Lem}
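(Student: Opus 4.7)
The plan is to establish this as the sharp form of Young's convolution inequality on the real line, with constant $D_{p,q}$ expressible in terms of the Babenko-Beckner constants $(r^{1/r}/(r')^{1/r'})^{1/2}$. I would begin by recasting the statement in trilinear form: by duality, \eqref{eq:lem-Young} is equivalent to
\begin{equation*}
\int_{\real\times\real} f(x)\, g(y)\, h(x+y)\, dx\, dy
\le D_{p,q} \|f\|_{L^p(\real)} \|g\|_{L^s(\real)} \|h\|_{L^{q'}(\real)}
\end{equation*}
subject to the scaling relation $1/p + 1/s + 1/q' = 2$. Invoking the Brascamp-Lieb-Luttinger rearrangement inequality \cite{BLL}, one may replace $f$, $g$, $h$ by their non-negative symmetric decreasing rearrangements; the $L^r$-norms are preserved while the trilinear integral only increases, so it suffices to treat radial decreasing inputs.

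Next, to identify the sharp constant and the extremal configuration, I would apply the tensor-power trick (as in \cite{Beckner-Annals75}): if $D$ denotes the best constant on $\real$, then for each $N\ge 1$ the same trilinear inequality on $\real^N$ with inputs $f^{\otimes N}$, $g^{\otimes N}$, $h^{\otimes N}$ has best constant $D^N$. Applying radial rearrangement in $\real^N$ and exploiting the rotation invariance of the Gaussian measure produces, after a central-limit-type normalization, a reduction to Gaussian extremizers. Equivalently, the Hermite semigroup rotation argument from two complex variables yields a monotone flow along which the ratio of the two sides is non-decreasing toward the Gaussian fixed points.

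Once the reduction to Gaussians is in hand, the constant is computed by an explicit calculation. Taking $f(x) = e^{-\pi a x^2}$, $g(x) = e^{-\pi b x^2}$, $h(x) = e^{-\pi c x^2}$, both sides evaluate to closed-form expressions in $a$, $b$, $c$, and the ratio is maximized at a unique (up to scaling) critical point where $a$, $b$, $c$ are determined by $p$, $s$, $q'$. Performing this variation as in the proof of Corollary~\ref{cor:optimal} — with the constraint $1/p + 1/s + 1/q' = 2$ playing the role of the dilation identity — yields the value of $D_{p,q}$.

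The main obstacle is the step reducing to Gaussian extremizers: the tensor-power argument shows that Gaussians give the correct constant in an asymptotic sense, but promoting this to a true extremal statement requires a genuine monotonicity argument (the complex rotation / Hermite semigroup method of \cite{Beckner-Annals75}, or the variational argument of Brascamp-Lieb). The rearrangement and Gaussian computation steps, by contrast, are essentially routine given the sharp statement.
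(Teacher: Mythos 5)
Your outline is correct and is in substance the same route the paper relies on: the paper gives no proof of this Lemma at all, quoting it as the known sharp Young's inequality transferred from the line (``see \cite{Beckner-Annals75}''), and your steps --- duality to the trilinear form under $1/p+1/s+1/q'=2$, Brascamp--Lieb--Luttinger rearrangement, reduction to Gaussian extremizers, and the explicit Gaussian computation of $D_{p,q}$ --- are precisely the argument of \cite{Beckner-Annals75} and \cite{BL}. The one step you flag as an obstacle (promoting the tensor-power/rearrangement asymptotics to a genuine Gaussian extremality statement) is exactly the content supplied by those references, so for the purposes of this paper the citation closes it.
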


But one can reverse this map from $\real$ to $\real_+$ and examine the nature of the 
convolution inequality \eqref{eq:inequality2} and the corresponding Hardy inequality in terms of 
analysis on the entire line. 
A question of particular interest is to determine sharp estimates in Young's inequality when one of 
the functions is specified and the map is from a space to its dual:
\begin{equation}\label{eq:Young_dual}
f \rightsquigarrow \varphi * f\ ,\quad | \varphi * f\|_{L^{p'}(\real^n)} 
\le C_p \|f\|_{L^p (\real^n)}\ ,\qquad 
1< p < 2
\end{equation}

Two intrinsic cases on $\real^n$ are: 
a)~gaussian $\varphi (x) = e^{-|x|^2}$ (see \cite{Beckner-Annals75});
b)~Riesz potential $\varphi (x) = |x|^{-2n/p'}$ (see \cite{Lieb83}). 
Hardy's inequality allows the extension of this program to include the exponential density 
$\varphi (x) = e^{-|x|}$ for one dimension.

\begin{thm}[Young's inequality for an exponential density] 
\label{thm:Young_density}
For $f\in L^p(\real)$, $1<p<2$, $1/p + 1/p' =1$ and $\varphi (x)=e^{-|x|}$
\begin{gather}
\|\varphi * f\|_{L^{p'}(\real)} \le C_p \|f\|_{L^p(\real)} 
\label{eq:young}\\
\noalign{\vskip6pt}
C_p = \Big( \frac{p'}2\Big)^{2/p}
\left[ \frac{\Gamma (\frac{2p}{2-p})}
{\Gamma (\frac2{2-p}) \Gamma (\frac{p}{2-p})} 
\right]^{2/p\ -\ 1}\notag
\end{gather}
\end{thm}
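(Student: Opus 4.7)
The plan is to reduce the bound on $\varphi * f$ to two successive applications of the sharp weighted Hardy inequality \eqref{sec2-eq7} by factoring the kernel. Setting $\phi_+(t) = e^{-t}\chi_{(0,\infty)}(t)$ and $\phi_-(t) = \phi_+(-t)$, a direct calculation will show $(\phi_+ * \phi_-)(t) = \tfrac{1}{2}e^{-|t|}$, so that $\varphi * f = 2\phi_+ * (\phi_- * f)$. The strategy will be to estimate $\phi_- * f$ in $L^2(\real)$ out of $f \in L^p(\real)$ via Hardy's $L^p \to L^2$ bound, and then to estimate $\phi_+$ acting on an $L^2$-function into $L^{p'}(\real)$ via Hardy's $L^2 \to L^{p'}$ bound.

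To bring each one-sided convolution into the reach of Hardy's inequality, I would pass to the multiplicative group $\real_+$ via $u = e^x$, $v = e^y$, exactly as in Theorem~\ref{thm:inequality}. Under this identification, $L^p(\real,dx)$ corresponds to $L^p(\real_+, du/u)$; the operator $h \mapsto \phi_+ * h$ becomes essentially the Hardy averaging $Th(u) = u^{-1}\int_0^u h(v)\,dv$, and $h \mapsto \phi_- * h$ its mirror image under $u \mapsto 1/u$. The weighted Hardy inequality \eqref{sec2-eq7} with exponents $p_0 \to q_0$ will transfer — after a power substitution on $\real_+$ and a one-parameter dilation adjusting the exponential decay rate from $1/p_0'$ to $1$ — to the estimate $\|\phi_+ * h\|_{L^{q_0}(\real)} \le (p_0')^{-1/p_0'-1/q_0}\, C_{p_0, q_0} \|h\|_{L^{p_0}(\real)}$. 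Applying this with $(p_0,q_0)=(p,2)$ and then with $(p_0,q_0)=(2,p')$ and inserting the factor $2$ from the factorization yields a bound of the stated form with constant $2 \cdot 2^{-1/2-1/p'} (p')^{-1/p'-1/2} C_{2,p'} C_{p,2}$.

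To put this into the stated closed form, I would set $1/r = 1/p - 1/2$ and note that both Hardy constants $C_{p,2}$ and $C_{2,p'}$ involve the same $r = 2p/(2-p)$. The identities $r/p = r/2 + 1$ and $r/p' = r/2 - 1$, both immediate from $1/r = 1/p - 1/2$, combined with $\Gamma(z+1)=z\Gamma(z)$, collapse $\Gamma(r/p)\Gamma(r/p')$ into $(p'/2)[\Gamma(r/2)]^2$. Tracking the accumulated powers of $p'$ and $2$ from the dilations above together with the $(p_0'/q_0)^{1/q_0}$ prefactors in $C_{p_0,q_0}$ should then rewrite the constant in the form $(p'/2)^{2/p}\,[\Gamma(r)/(\Gamma(r/p)\Gamma(r/2))]^{2/p - 1}$, matching the asserted $C_p$.

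The main obstacle I expect is this final bookkeeping. Pinning down the correct dilation parameter so that Hardy's transferred kernel $e^{-t/p'}\chi_{(0,\infty)}$ matches the true one-sided piece $e^{-t}\chi_{(0,\infty)}$ of $\varphi$, and then collapsing the product of two three-Gamma Hardy constants into a single three-Gamma expression via the shift relations above, are the non-routine parts. The analytic input — the sharp weighted Hardy inequality on $\real_+$ in its equivalent convolution form \eqref{eq:inequality2} — is already in hand from Section~2.
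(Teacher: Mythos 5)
Your factorization argument is sound, and it is essentially the paper's own mechanism in operator form. The paper applies the sharp weighted Hardy inequality \eqref{sec2-eq7} once, with $q=2$, transfers it by $t=e^x$ to the one-sided bound $\|\phi_{p'}*h\|_{L^2(\real)}\le C_{p,2}\|h\|_{L^p(\real)}$ with $\phi_{p'}(s)=e^{-s/p'}\chi_{(0,\infty)}(s)$, and then squares: $\|\phi_{p'}*h\|_{2}^2=\langle(\tilde\phi_{p'}*\phi_{p'})*h,h\rangle$ with $\tilde\phi_{p'}*\phi_{p'}=\tfrac{p'}{2}e^{-|\cdot|/p'}$, which after one dilation is the bilinear form for $e^{-|x|}$; positivity of $\widehat\varphi$ then turns the form bound into \eqref{eq:young}. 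You instead compose the operator with its adjoint, $L^p\to L^2\to L^{p'}$, via $\varphi=2\phi_+*\phi_-$; since the two one-sided kernels are reflections/adjoints of each other, $2^{-1/2-1/p'}C_{2,p'}=(p')^{-1/2-1/p'}C_{p,2}$, so your two factors coincide and you never need the positive-definiteness step. The analytic input is identical (the sharp Hardy constant at exponent $2$, transferred as in Theorem~\ref{thm:inequality}); only the packaging differs, and your version is if anything cleaner for producing the operator bound directly.

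The step that will not go as you expect is the final bookkeeping. Your intermediate identities ($r/p=r/2+1$, $r/p'=r/2-1$, $\Gamma(r/p)\Gamma(r/p')=(p'/2)\Gamma(r/2)^2$) are correct, but carrying the scalings through exactly gives
\begin{equation*}
2\cdot 2^{-\frac12-\frac1{p'}}(p')^{-\frac12-\frac1{p'}}\,C_{2,p'}\,C_{p,2}
=(p')^{\frac2p-2}\left[\frac{\Gamma(\tfrac{2p}{2-p})}{\Gamma(\tfrac2{2-p})\Gamma(\tfrac{p}{2-p})}\right]^{\frac2p-1},
\end{equation*}
i.e.\ the same Gamma bracket as the displayed $C_p$ but prefactor $(p')^{2/p-2}$ rather than $(p'/2)^{2/p}$; your constant is smaller by the factor $(p')^2\,2^{-2/p}>1$, so the identification you anticipate is impossible. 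This is harmless for the statement --- a smaller constant implies \eqref{eq:young} a fortiori --- but do not claim the match. Indeed the displayed $C_p$ cannot be the sharp value: as $p\to 2^-$ it tends to $4$, while the $L^2\to L^2$ norm of convolution with $e^{-|x|}$ is $\widehat\varphi(0)=2$, and interpolating the trivial $L^1\to L^\infty$ bound with the $L^2$ bound gives $2^{2/p'}<2$ for $1<p<2$; concretely, at $p=4/3$ your composed constant is $\sqrt3/2$, attained by $f=\cosh^{-3}$ (note $e^{-|\cdot|}*\cosh^{-3}=\cosh^{-1}$), whereas the displayed value is $2\sqrt6$. The discrepancy comes from the dilation step at the end of the printed proof (rescaling the kernel $e^{-|u-v|/p'}$ to $e^{-|u-v|}$ multiplies the form constant $\tfrac2{p'}K^{2/p}$ by $(p')^{2/p-2}$, giving $2(p')^{2/p-3}[C_{p,2}]^2$, not $(p'/2)^{2/p-1}K^{2/p}$), so when your bookkeeping refuses to produce the stated formula, trust your computation: your route proves the inequality with the correct, in fact optimal, constant.
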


\begin{proof} 
{From} the Hardy inequality for $q=2$:
\begin{equation*}
\int_0^\infty \Big| \int_0^s g(t)\,dt\Big|^2 s^{-1\ -\ 2/p'}\,ds 
\le K^{2/p} \bigg[ \int_0^\infty |g|^p\,ds\bigg]^{2/p} \ ,
\qquad K = [C_{p,2}]^p
\end{equation*}
First, set $g(t) = h(t) t^{-1/p}$, and then set $t= e^x$
\begin{gather*}
\int_0^\infty \Big| \int_0^s h(t) (t/s)^{1/p'} \frac1t\,dt\Big|^2 
\frac1s\,ds 
\le K^{2/p} \bigg[ \int_0^\infty |h|^p \frac1s \,ds\bigg]^{2/p}\\
\noalign{\vskip6pt}
\int_{-\infty}^\infty \Big| \int_{-\infty}^x h(y) e^{-(x-y)/p'}\,dy\Big|^2\,dx
\le K^{2/p} \bigg[ \int_{-\infty}^\infty |h|^p\,dx \bigg]^{2/p}\\
\noalign{\vskip6pt}
\Big| \int_{\real\times\real} h(u) e^{-2/p'|u-v|} h(v)\,du\,dv\Big| 
\le \frac2{p'} K^{2/p} \bigg[\int_{\real} |h|^p\,dx\bigg]^{2/p}\\
\noalign{\vskip6pt}
C_p = \Big(\frac{p'}2\Big)^{2/p\ -\ 1} K^{2/p} 
= \Big( \frac{p'}2\Big)^{2/p} 
\bigg[\frac{\Gamma (\frac{2p}{2-p})} 
{\Gamma(\frac{2}{2-p}) \Gamma (\frac{p}{2-p})}\bigg]^{2/p\ -\ 1}
\end{gather*}
An extremal function for inequality~\eqref{eq:young} is given by 
$$f(x) = \Big[\text{cosh} \big[p'x/(4p\delta)\big]\Big]^{-\delta}\ ,\qquad 
\delta = \frac2{2-p}\ .$$
\end{proof}

Following the direction defined by the calculation for the optimal constant in 
Theorem~\ref{thm:Young} and the principal results in \cite{Beckner-MRL}, the question 
of finding the optimal constant for $p=2$ in Theorem~\ref{thm:rearrangement} is examined. 
The importance of such estimates lies with providing insight into the structure of ``maps from 
a space to its dual''. 
Here there are two independent calculations corresponding to the cases $p = 2 = q'$, $m\ge2$ 
and $p=2>q'$. 
The results for the Hardy operator are analogous to the cases that use Riesz potentials and 
extend Pitt's inequality and the Hardy-Littlewood-Sobolev inequality for multilinear embedding 
in \cite{Beckner-MRL}.

\begin{thm}\label{thm:independent}
For $m\ge2$, $2\le q < \infty$ and $\gamma = m/2 - 1/q$ with $H\in L^2(\Lambda_m)$ and 
$\psi (x)  = x^\gamma [(T\otimes \cdots \otimes T) H ](x,\ldots x)$
\begin{equation}\label{eq:independ1}
\|\psi \|_{L^q (0,\infty)} \le C\|H\|_{L^2 (\Lambda_m)}
\end{equation}
where for $q=2$
\begin{equation}\label{eq:independ2} 
C = 2/\sqrt{m}
\end{equation}
and for $q>2$,  $C$ is determined by the optimal constant for the weighted Hardy inequality.
\end{thm}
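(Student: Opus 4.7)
The plan is to specialize the rearrangement argument from the proof of Theorem~\ref{thm:rearrangement} to the case $p=2$ and to track the constants explicitly; the key point is that after reducing to the one-dimensional weighted Hardy inequality \eqref{sec2-eq7}, all factors of $\omega_{m-1}$, $2^m$, and the scaling $a$ cancel to leave exactly an $m^{-1/q}$ Jacobian in front of the sharp constant $C_{2,q}$ from \eqref{sec2-eq7}.

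First I would use the rearrangement lemma \eqref{eq:sqr} to replace $H$ by its radial-decreasing rearrangement $H_\#=f^*(|\cdot|)$ on $\Lambda_m$, since $\int_{[0,x]^m}H(s)\,ds$ pairs $H$ with $\chi_{[0,x]^m}$, whose $\#$-rearrangement on $\Lambda_m$ is $\chi_{B_{ax}\cap\Lambda_m}$ with $a^m=2^m/\mathrm{vol}(B)$. Passing to polar coordinates yields the pointwise bound
\[
\psi(x)\le\frac{\omega_{m-1}}{2^m}\,x^{-m/2-1/q}\int_0^{ax}f^*(r)\,r^{m-1}\,dr.
\]
Next I would substitute $v=r^m$ to convert the inner integral into $\frac1m\int_0^{(ax)^m}G(v)\,dv$ with $G(v)=f^*(v^{1/m})$, and set $y=(ax)^m$ in the outer integral $\int|\psi|^q\,dx$. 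The resulting estimate reduces precisely to
\[
\int_0^\infty y^{-q/2-1}\Bigl(\int_0^y G(v)\,dv\Bigr)^q dy \le C_{2,q}^q\,\|G\|_{L^2(0,\infty)}^q,
\]
which is the weighted one-dimensional Hardy inequality \eqref{sec2-eq7} at $p=2$. Applying it with the sharp constant $C_{2,q}$ and using the identity $\|G\|_{L^2(0,\infty)}^2=(m\cdot 2^m/\omega_{m-1})\,\|H\|_{L^2(\Lambda_m)}^2$, the spherical-volume constants cancel to give
\[
\|\psi\|_{L^q(0,\infty)}\le m^{-1/q}\,C_{2,q}\,\|H\|_{L^2(\Lambda_m)}.
\]
Specializing to $q=2$ where $C_{2,2}=2$ yields $C=2/\sqrt{m}$; for $q>2$ the constant $C=m^{-1/q}C_{2,q}$ is determined by the sharp weighted Hardy constant, as claimed.

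The main obstacle is the book-keeping: one must carefully match the scaling $a=(2^m/\mathrm{vol}(B))^{1/m}$ with the spherical-volume identity $\omega_{m-1}=m\,\mathrm{vol}(B)$, the Jacobian $dx=y^{1/m-1}/(ma)\,dy$, and the factor $(m\cdot 2^m/\omega_{m-1})^{q/2}$ coming from $\|G\|_{L^2}^q$, and verify that their combined contribution is exactly $1/m$. Conceptually the argument is a direct specialization of Theorem~\ref{thm:rearrangement}; what makes it worth writing down separately is that the sharp constant $C_{2,q}$ transfers without any loss, so that for $p=2=q'$ one obtains the explicit sharp value $2/\sqrt{m}$, paralleling the Hardy--Littlewood--Sobolev result in \cite{Beckner-MRL}.
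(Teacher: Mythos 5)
Your bookkeeping is correct as far as it goes: with the rearrangement lemma \eqref{eq:sqr}, the cube-to-sector-ball comparison with $a^m=2^m/\mathrm{vol}(B)$, the substitution $v=r^m$ and the identity $\omega_{m-1}=m\,\mathrm{vol}(B)$, all volume and scaling factors cancel exactly and one is left with $\|\psi\|_{L^q(0,\infty)}\le m^{-1/q}C_{2,q}\|H\|_{L^2(\Lambda_m)}$ (at $q=2$ one must invoke the unweighted Hardy inequality \eqref{sec2-eq6} rather than \eqref{sec2-eq7}, giving $2/\sqrt m$). This is, however, a genuinely different argument from the paper's, which works on the dual side: there $\|\Psi^*h\|^2_{L^2(\Lambda_m)}$ is computed exactly --- with no inequality --- as $\iint (xw)^{-\alpha}\min(x,w)^m\,h(x)h(w)\,dx\,dw$, $\alpha=m/2+1/q$, and after the substitution $h(x)=x^{-1/q'}g(x)$ this becomes convolution on the multiplicative group $\real_+$ with the nonnegative kernel $k(t)=t^{-m/2}\chi_{[1,\infty)}(t)$; for $q=2$ the operator norm of that convolution on $L^2(\real_+)$ is exactly $\|k\|_{L^1(\real_+)}=2/m$, whence $C^2=4/m$ together with its optimality.

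That last point is the gap in your proposal. Your chain contains genuine inequalities (the rearrangement step and the replacement of $[0,x]^m$ by the sector-ball of equal measure), so it proves only the upper bound; the sentence that ``the sharp constant $C_{2,q}$ transfers without any loss'' is an assertion, not a proof, and since the announced content of the theorem is the optimal constant for $p=2$, you still need a matching lower bound at $q=2$ --- e.g.\ test the dual quadratic form with $h(x)=x^{-1/2}\chi_{[1,R]}(x)$, $R\to\infty$, or note as above that the dual reduction is an identity and convolution with a nonnegative kernel has $L^2$ norm equal to the $L^1$ norm of the kernel. For $q>2$ be aware also that your constant $m^{-1/q}C_{2,q}$ is not the constant \eqref{eq:independ-pf3} extracted in the paper, namely $\sqrt2\,(2/mq)^{1/q}\sqrt{C_{q',q}}$: at $m=2$, $q=4$ these equal $(3/4)^{1/4}\approx 0.93$ and $\sqrt2\,\pi^{-1/4}\approx 1.06$ respectively. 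Both derivations are chains of valid inequalities, so they are simply two different upper bounds, and at most one can be optimal; your value is consistent with the wording of the statement for $q>2$ (it is ``determined by'' a weighted Hardy constant, though with exponents $(2,q)$ rather than $(q',q)$), but you should not claim sharpness for it, since the cube-versus-ball comparison is lossy for fixed $H$.
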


\begin{proof} 
As in the calculations in the previous section, consider the dual problem where 
$\alpha = m/2 + 1/q = \gamma + 2/q$
\begin{equation*}
\begin{split}
& \int_{\Lambda_m} \Big| \int_{x > \{ t_1,\ldots,t_m\}} \mkern-48mu 
x^{-\alpha} h (x)\,dx \Big|^2 \, dt_1 \cdots dt_m\\
\noalign{\vskip6pt}
&\qquad 
= m! \int_{\Lambda_m} \Big| \int_{x > t_1 > \cdots > t_m} \mkern-48mu 
x^{-\alpha} h(x)\,dx\Big|^2\, dt_1 \cdots dt_m\\
\noalign{\vskip6pt}
&\qquad 
= 2m! \int_{\Lambda_m} \iint_{w > x > t_1 > \cdots > t_m}\mkern-48mu 
x^{-\alpha} w^{-\alpha} h(x) h(w)\,dx\, dw\,dt_1\cdots dt_m\\
\noalign{\vskip6pt}
&\qquad 
= 2\int_0^\infty h(w) w^{-\alpha} \bigg[ \int_0^w x^\gamma h(x) \,dx\bigg]\,dw 
\le C^2 \Big[ \|h\|_{L^{q'} [0,\infty)} \Big]^2
\end{split}
\end{equation*}
which is equivalent to the inequality 
\begin{equation}\label{eq:independ-pf}
\| k * g \|_{L^q (\real_+)} \le C^2/2 \ \|g\|_{L^{q'} (\real_+)}
\end{equation}
where $k(t) = t^{-m/2} \chi_{[1,\infty)} (t)$.
Then for $q=2$, the best constant is given by the $L^1 (\real_+)$ norm of $k$ 
which has the value $2/m$ with $C = 2/\sqrt{m}$.
To determine the optimal constant for the case $q>2$, one must relate bounds for the operator
\begin{equation}\label{eq:independ-pf2}
h\in L^{q'} (0,\infty) \rightsquigarrow w^{-\sigma - 1/q} \int_0^w x^{\sigma - 1/q} 
h(x)\, dx \in L^q (0,\infty)
\end{equation} 
to those for the weighted Hardy inequality \eqref{sec2-eq7} 
in the case of dual exponents and $\sigma = m/2$ by making a simple change of variables. 
Note that the dilation character of the symmetric form 
\begin{equation*}
\int_0^\infty h(w) w^{-\sigma - 1/q} \bigg[ \int_0^w x^{\sigma -1/q} h(x)\,dx\bigg] \,dw
\end{equation*}
is measured by the index $2/q$ which corresponds to the example of Riesz potentials and 
the Hardy-Littlewood-Sobolev inequality on $\real^n$ where the index would be $2n/q$.
Set
$$h(x) = x^\beta u (x^\delta)\ ,\qquad q' \beta = \delta -1$$
Then 
$$\|h\|_{L^{q'} (0,\infty)} = \delta^{-1/q'} \|u\|_{L^{q'}(0,\infty)}$$
and 
\begin{equation*}
\begin{split} 
\int_0^\infty h(w) w^{-\sigma - 1/q} \bigg[ \int_0^2 x^{\sigma - 1/q} h(x)\,dx\bigg]\, dw
& = \delta^{-2} \int_0^\infty u(w) w^{-2/q} \bigg[ \int_0^w u(x)\, dx\bigg]\,dw\\
\noalign{\vskip6pt}
& = \delta^{-2} \int_0^\infty u(w) w^{1/q' - 1/q} (Tu)(w)\,dw
\end{split}
\end{equation*}
for the choice $\delta = q\sigma$. 
Then $C^2$ for the constant appearing in the inequality \eqref{eq:independ1} 
when $q>2$ is given by 
\begin{equation*}
2(2/mq)^{2/q} \sup \Big\{ \|x^{1/r} Tu\|_{L^q (0,\infty)} \Big\slash \|u\|_{L^{q'}(0,\infty)} \Big\}
= 2 (2/mq)^{2/q} C_{q',q}
\end{equation*}
with $1/r = 1/q' - 1/q$ and $2<q<\infty$. 
Then the constant $C$ for inequality \eqref{eq:independ1} is given by 
\begin{equation} \label{eq:independ-pf3}
C = \sqrt2\ (2/mq)^{1/q} \sqrt{C_{q',q}}
\end{equation}
\end{proof}

A functional integral that has similar properties to the Hardy functional is the 
Hilbert integral:
\begin{gather}
f \rightsquigarrow I(f) (x) = \int_0^\infty \frac1{x+y} f(y)\,dy\ ,\qquad x>0 \label{eq:funct-int}\\
\noalign{\vskip6pt}
\|I(f)\|_{L^p (0,\infty)} \le \Gamma (1/p) \Gamma (1/p') \|f\|_{L^p (0,\infty)}\ ,\qquad 1<p<\infty 
\nonumber
\end{gather}
Observe that the Hilbert integral is controlled by the Hardy operator: 
$I(f) \le T(f) + T^* (f)$ where $T^*$ denotes the adjoint for $T$.
Such simple estimates allow one to recover elementary inequalities such as $\theta (1-\theta) 
\Gamma (\theta) \Gamma (1-\theta) \le 1$ and   provide a check that ``calculated constants 
make sense.'' 
Because of the associated role of exponential densities when dilation-invariant operators 
are transferred from the real half-line to the multiplicative group $\real_+$ and then to the 
additive group $\real$ (see proof of Theorem~\ref{thm:Young_density}), it is instructive 
to apply the methods used here and in \cite{Beckner-MRL} to calculate the precise norm for 
the diagonal trace map corresponding to the Hilbert integral from $L^2(\Lambda_m)$ to 
$L^2 (0,\infty)$. 

\begin{thm}\label{thm:precise_norm}
For $m\ge 2$ and $\gamma = (m-1)/2$ with $H\in L^2(\Lambda_m)$ and \newline
$\psi (x) = x^\gamma [(I\otimes\cdots\otimes I)H] (x,\ldots,x)$
\begin{gather} 
\|\psi \|_{L^2 (0,\infty)} \le C\|H\|_{L^2(\Lambda_m)} \label{eq:precise_norm}\\
\noalign{\vskip6pt}
C = 2 \bigg[ \int_0^\infty \left[ \frac{x}{\sinh x}\right]^m \,dx \bigg]^{1/2}\ .\nonumber
\end{gather}
\end{thm}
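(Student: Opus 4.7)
My plan is to follow the template of Theorem~\ref{thm:independent}: pass to the dual bilinear estimate, evaluate the $y$-integrations in closed form, transfer the resulting quadratic form to the additive line $\real$ via the exponential map, and extract the claimed constant by Plancherel applied to a positive, even convolution kernel.

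First I would note that by duality \eqref{eq:precise_norm} is equivalent to
\[
\int_{\Lambda_m}\bigg|\int_0^\infty \frac{x^{(m-1)/2}\,h(x)}{\prod_{k=1}^m(x+y_k)}\,dx\bigg|^2 dy_1\cdots dy_m \le C^2\|h\|_{L^2(0,\infty)}^2.
\]
Expanding the modulus squared, exchanging orders of integration, and evaluating each of the $m$ inner $y_k$-integrals by the partial-fraction identity $\int_0^\infty (x+y)^{-1}(w+y)^{-1}\,dy = \log(w/x)/(w-x)$, the left-hand side collapses to the quadratic form
\[
Q(h) := \int_0^\infty\!\!\int_0^\infty (xw)^{(m-1)/2}\,h(x)\overline{h(w)}\bigg[\frac{\log(w/x)}{w-x}\bigg]^m dx\,dw.
\]

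Next I would convert $Q(h)$ into a translation-invariant form by substituting $x=e^s$, $w=e^t$ and introducing $g(s):=e^{s/2}h(e^s)$, which satisfies $\|g\|_{L^2(\real)}=\|h\|_{L^2(0,\infty)}$. Using the factorization $e^t-e^s = 2e^{(s+t)/2}\sinh((t-s)/2)$, the exponential weights telescope cleanly and $Q(h)$ becomes
\[
Q(h) = 2^{-m}\langle K*g,\,g\rangle_{L^2(\real)},\qquad K(u)=\Big(\frac{u}{\sinh(u/2)}\Big)^m,
\]
with $K$ positive, even, and integrable on $\real$.

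Finally I would apply Plancherel. Because $K\ge 0$, one has $\|\widehat K\|_\infty = \widehat K(0) = \int_\real K$, so $\langle K*g,g\rangle \le \bigl(\int_\real K\bigr)\,\|g\|_{L^2}^2$. A substitution $u=2v$ evaluates $\int_\real K(u)\,du = 2^{m+2}\int_0^\infty (v/\sinh v)^m\,dv$, and combining with the prefactor $2^{-m}$ yields $C^2 = 4\int_0^\infty (v/\sinh v)^m\,dv$, as claimed. The main obstacle is not a hard analytic step but the careful bookkeeping required to verify that the weights $(xw)^{(m-1)/2}$, the change-of-variable Jacobian $e^{s+t}$, and the factor $e^{-m(s+t)/2}$ coming from $[(w-x)^{-1}]^m$ conspire precisely to leave a genuine translation-invariant kernel on $\real$. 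Once that cancellation is checked, positivity and evenness of $K$ render Plancherel optimal and the stated constant falls out of the trivial rescaling.
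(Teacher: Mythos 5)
Your proposal is correct and follows essentially the same route as the paper: dualize, evaluate each $y_k$-integral to get the kernel $(xw)^{(m-1)/2}\bigl[\log(w/x)/(w-x)\bigr]^m$, pass to the additive line via $x=e^s$ with the weight $e^{s/2}$, and identify the constant as the $L^1$-norm of the resulting positive convolution kernel (your rescaling $u=2v$ inside the norm computation is just the paper's ``rescale by a factor of $2$'' done at the end). The exponent bookkeeping you flag does indeed cancel exactly as you claim, and your Plancherel justification matches the paper's implicit use of sharpness of Young's inequality for a nonnegative kernel on $L^2$.
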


\begin{proof} 
As with the proof of the preceding theorem, consider the dual problem 
\begin{align*}
&\int_{\Lambda_m} \Big| \int_0^\infty x^\gamma \prod \frac1{x+y_k} \, h(x)\,dx\Big|^2\,
dy_1 \cdots dy_m\\
\noalign{\vskip6pt}
&\qquad 
= \int_0^\infty \int_0^\infty h(x) x^\gamma \bigg[ \int_0^\infty \frac1{x+y}\, \frac1{w+y}\,dy
\bigg]^m w^\gamma h(w)\, dx\,dw\\
\noalign{\vskip6pt}
&\qquad 
= \int_0^\infty \int_0^\infty h (x) \left[ \frac{(xw)^{(m-1)/2}}{|x-w|^m} \Big| \ln \frac{x}w \Big|^m
\right] h(w)\,dw\\
\noalign{\vskip6pt}
&\qquad 
\le C^2 \Big[ \|h\|_{L^2 (0,\infty)} \Big]^2
\end{align*}
First set $h(x) = x^{-1/2} g(x)$ and then change variables $x= e^t$ and rescale by a factor
of 2 to obtain a convolution inequality on the real line 
$$\|\varphi * f\|_{L^2 (\real)} \le C^2 \|f\|_{L^2 (\real)}\ ,\qquad \varphi (x) = 2[x/\sinh x]^m$$
with 
$$C^2 = \|\varphi\|_{L^1 (\real)} = 4 \int_0^\infty [x/\sinh x]^m\,dx\ .$$
\end{proof}

%%%%%%%%
\section{Diagonal Trace Restriction for a Multilinear Fractional Integral}

Modeled on results such as equation \eqref{eq1.1} above plus ideas from \cite{Beckner-MRL}, 
\cite{KS99} (see Theorem~1 and Lemma~7) and \cite{SL}, 
a diagonal trace restriction inequality is obtained for a multilinear fractional integral.

\begin{thm}\label{thm:diagonal}
For $H\in L^p (\real^{mn})$, $\rho (x_1,\ldots,x_m) = \sum |x_k|$, $x_k\in \real^n$ and 
$$F(x) = \int_{\real^{mn}} \rho (x-y_1,\ldots x-y_m)^{-(mn-\alpha)} 
H(y_1,\ldots, y_m)\,dy$$
with $\alpha/n = m/p - 1/q$; then for $q\ge p$
\begin{equation}\label{eq:diagonal-1}
\|F\|_{L^q (\real^n)} \le C \|H \|_{L^p (\real^{mn})} 
\end{equation}
For $p =2$, the optimal constant is
\begin{equation}\label{eq:diagonal-2} 
C =  \sqrt{B} \  \pi^{n/2q} \left[ \frac{\Gamma (\frac{n}2 - \frac{n}q)}{\Gamma (n)}\right]^{1/2} 
\left[ \frac{\Gamma (n)}{\Gamma (\frac{n}{q'})}\right]^{1/q'}
\end{equation}
\begin{equation*}
B = \int_{\real^{mn}} \Big( \sum |\hat\eta - y_k|\Big)^{-(mn-\alpha)} 
\Big( \sum |y_k|\Big)^{-(mn-\alpha)} dy\ ,\qquad |\hat \eta |=1
\end{equation*}
\end{thm}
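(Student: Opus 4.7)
The plan splits naturally into (i) the inequality \eqref{eq:diagonal-1} for $q\ge p$, and (ii) the identification of the sharp constant when $p=2$. For (i), I would apply AM--GM,
\[ \rho(x-y_1,\ldots,x-y_m) \;=\; \sum_{k=1}^m |x-y_k| \;\ge\; m\Big(\prod_{k=1}^m |x-y_k|\Big)^{1/m}, \]
to deduce the pointwise kernel bound
\[ \rho(x-y)^{-(mn-\alpha)} \;\le\; m^{-(mn-\alpha)}\prod_{k=1}^m |x-y_k|^{-(mn-\alpha)/m}. \]
This dominates $F$ by a constant times the $m$-fold Riesz potential of Theorem~\ref{thm:HLS} with equal exponents $s_k = mn/(mn-\alpha)$. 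A direct check gives $\sum 1/s_k = (mn-\alpha)/n = m/p' + 1/q$ via $\alpha/n = m/p-1/q$, and $1<s_k<p'$ follows from $\alpha>0$ and $q<\infty$; Theorem~\ref{thm:HLS} then delivers \eqref{eq:diagonal-1}.

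For (ii), with $p=2$ I would follow the template of Theorems~\ref{thm:independent} and~\ref{thm:precise_norm}: set $TH=F$ so that $T:L^2(\real^{mn})\to L^q(\real^n)$, and exploit the identity $\|T\|^2=\|TT^\ast\|_{L^{q'}\to L^q}$, which holds by a Cauchy--Schwarz-plus-$L^q$-duality argument even though $L^{q'}$ is not Hilbert. The composition $TT^\ast$ has kernel
\[ K(x,w) = \int_{\real^{mn}} \rho(x-y)^{-(mn-\alpha)}\rho(w-y)^{-(mn-\alpha)}\,dy. \]
Translating $y_k\mapsto y_k+x$ and then rescaling $y_k\mapsto |x-w|\,y_k$ --- using that $\rho$ is $1$-homogeneous and sign-flip symmetric, together with the rotational invariance of the remaining integral (which is what permits $B$ to be defined with any fixed unit $\hat\eta$) --- collapses $K$ to $B\,|x-w|^{2\alpha-mn}$, whose homogeneity index reduces to $-2n/q$ once $p=2$.

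Thus $TT^\ast$ is $B$ times the Riesz convolution $h\mapsto |\cdot|^{-2n/q}\ast h$ at the conformal (dual-exponent) HLS level, and its sharp $L^{q'}\to L^q$ norm is supplied by Lieb's sharp Hardy--Littlewood--Sobolev inequality. Extracting a square root then produces the constant displayed in \eqref{eq:diagonal-2}. The principal obstacle is this last step: Lieb's sharp constant in standard form involves $\Gamma(n/2)$, $\Gamma(n/q')$ and $\Gamma(n/2-n/q)$, and reorganizing it into the specific grouping of Gamma factors shown in \eqref{eq:diagonal-2} calls for careful use of the Gamma duplication and reflection identities. A secondary subtlety is the endpoint $q=2$, where $K(x,w)\propto|x-w|^{-n}$ saturates the HLS range; as in Theorem~\ref{thm:independent}, this endpoint must be treated separately by transferring the problem to a convolution on $\real_+$ and computing the resulting $L^1$-kernel norm directly.
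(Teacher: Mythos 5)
Your proposal follows the paper's proof essentially verbatim: the sum--product (AM--GM) comparison reduces \eqref{eq:diagonal-1} to Theorem~\ref{thm:HLS}, and for $p=2$ the dual ($TT^*$) computation collapses the kernel by translation, dilation and rotation invariance to $B\,|x-w|^{-2n/q}$, after which Lieb's sharp Hardy--Littlewood--Sobolev constant is inserted. The only differences are cosmetic: the paper simply substitutes the sharp HLS constant without any Gamma-function reorganization, and it does not treat a $q=2$ endpoint (where $\Gamma(\tfrac n2-\tfrac nq)$ degenerates and no finite sharp constant is claimed), so your closing remark about a separate $\real_+$ computation is not part of, or needed for, the argument.
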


\begin{proof} 
By using a simple comparison between sums and products, the first part of this theorem 
follows from the multilinear Hardy-Littlewood-Sobolev inequality (Theorem~\ref{thm:HLS} above). 
For the case $p=2$, use duality to consider 
\begin{equation}\label{eq:comparison}
\bigg[ \int_{\real^{nm}} \Big| \int_{\real^n} \rho (x-y_1,\ldots, x-y_m)^{-(mn-\alpha)} 
h(x)\,dx \Big|^2\,dy \bigg]^{1/2} 
\le C\| h\|_{L^{q'} (\real^n)}
\end{equation}
Then 
\begin{equation*}
\begin{split}
&\int_{\real^{mn}} \rho (x-y_1,\ldots, x-y_m)^{-(mn-\alpha)} 
\rho (w-y_1,\ldots, w-y_m)^{-(mn-\alpha)} dy\\
\noalign{\vskip6pt}
&\qquad = \int_{\real^{mn}} \rho (x-w-y_1,\ldots, x-w-y_m)^{-(mn-\alpha)} 
\rho (y_1,\ldots, y_m)^{-(mn-\alpha)} dy\\
\noalign{\vskip6pt}
&\qquad = |x-w|^{-(mn-2\alpha)} 
B = |x-w|^{-2n/q} B
\end{split}
\end{equation*}
where 
\begin{equation*}
B = \int_{\real^{mn}} \rho (\hat\eta - y_1,\ldots, \hat \eta -y_m)^{-(mn-\alpha)} 
\rho (y_1,\ldots, y_m)^{-(mn-\alpha)} dy\ ,\qquad \hat\eta \in S^{n-1}
\end{equation*}
Observe that $B$ does not depend on the value of the unit vector $\hat\eta$. 
So the dual expression becomes 
\begin{equation*}
B \int_{\real^n\times\real^n} h(x) |x-w|^{-2n/q} h(w)\,dx\,dw 
\le C^2 \Big[ \| h\|_{L^{q'} (\real^n)}\Big]^2
\end{equation*}
Using the sharp constant for the classical Hardy-Littlewood-Sobolev inequality 
\begin{equation*}
C^2 = B\ \pi^{n/q} \ \frac{\Gamma (\frac{n}2 - \frac{n}q)}{\Gamma (\frac{n}{q'})} \ 
\left[ \frac{\Gamma (n)}{\Gamma (\frac{n}{q'})}\right]^{2/q' \, -\, 1}
\end{equation*}
\end{proof}

The dilation character of the above argument easily transfers to a more general statement.
Consider a non-negative function $G(x_1,\ldots,x_m)$ defined on $\real^n\times\cdots\times 
\real^n \simeq \real^{mn}$ that is invariant under a uniform rotation:
$$G(Rx_1, \ldots , Rx_m) = G(x_1,\ldots, x_m)\ ,\qquad R \in SO(n)$$
and with the dilation property --- there exists a sequence $(\beta_1,\ldots,\beta_m)$, $\beta_k\ge0$ 
and $\sum \beta_k = mn-\alpha = mn/p' + n/q$, $q>p$ such that for $\delta_k >0$
$$G(\delta)_1 x_1 ,\ldots, \delta_m x_m) = \prod \delta_k^{-\beta_k} G (x_1,\ldots, x_m)$$

\begin{Cor}
For $H \in L^2 (\real^{mn})$ and 
$$F(x) = \int_{\real^{mn}} G (x-y_1,\ldots, x-y_m) H (y_1,\ldots ,y_m)\,dy$$
with $\alpha = m/2 - 1/q$; then for $q>2$
\begin{equation}\label{eq:Cor-dilation}
\|F\|_{L^q (\real^n)} \le C\|H \|_{L^2 (\real^{mn})}
\end{equation} 
with the optimal constant 
\begin{gather*}
C = \sqrt{B}\ \pi^{n/2q} \left[ \frac{\Gamma (\frac{n}2 - \frac{n}q)}{\Gamma (n)}\right]^{1/2} 
\left[ \frac{\Gamma (n)}{\Gamma (\frac{n}{q'})}\right]^{1/q'}\\
\noalign{\vskip6pt}
B = \int_{\real^{mn}} G (\hat\eta - y_1,\ldots, \hat\eta- y_m) G (y_1,\ldots, y_m)\,dy\ ,\qquad |\eta|=1
\end{gather*}
if this value for $B$ is finite. 
\end{Cor}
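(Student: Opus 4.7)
The plan is to mirror the proof of Theorem~\ref{thm:diagonal} almost verbatim, substituting the general dilation and rotation hypotheses on $G$ for the explicit homogeneity of the kernel $\rho^{-(mn-\alpha)}$. By duality, the bound \eqref{eq:Cor-dilation} is equivalent to
\begin{equation*}
\Bigl[\int_{\real^{mn}} \Bigl| \int_{\real^n} G(x-y_1,\ldots,x-y_m)\,h(x)\,dx\Bigr|^2 dy\Bigr]^{1/2}
\le C\,\|h\|_{L^{q'}(\real^n)}.
\end{equation*}
Squaring the left side and applying Fubini produces $\iint h(x)\,h(w)\,K(x-w)\,dx\,dw$ with
\begin{equation*}
K(z) = \int_{\real^{mn}} G(u_1,\ldots,u_m)\,G(z+u_1,\ldots,z+u_m)\,du,
\end{equation*}
after translating $y_k \mapsto x-y_k$ in the inner product of $G$ factors.

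The next step is to compute $K(z)$ using the joint homogeneity of $G$. Substituting $u_k = |z|\,v_k$ and invoking the dilation identity with total exponent $\sum\beta_k = mn-\alpha$ extracts a factor $|z|^{mn-2(mn-\alpha)} = |z|^{2\alpha-mn} = |z|^{-2n/q}$, and leaves the integral $\int G(v_1,\ldots,v_m)\,G(\hat\eta + v_1,\ldots,\hat\eta+v_m)\,dv$ where $\hat\eta = z/|z|$. The uniform $SO(n)$-invariance of $G$, applied diagonally, makes this quantity independent of the unit direction $\hat\eta$ (all unit vectors lie in a single $SO(n)$ orbit for $n\ge 2$), so it equals the constant $B$ displayed in the statement. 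Hence $K(z) = B\,|z|^{-2n/q}$.

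With this explicit kernel in hand, the dual estimate reduces to
\begin{equation*}
B\int_{\real^n\times\real^n} h(x)\,|x-w|^{-2n/q}\,h(w)\,dx\,dw \le C^2\,\|h\|_{L^{q'}(\real^n)}^2,
\end{equation*}
which is precisely the classical Hardy-Littlewood-Sobolev inequality on $\real^n$. Inserting Lieb's sharp constant exactly as at the end of the proof of Theorem~\ref{thm:diagonal} then yields the stated value of $C$, with the extra factor $\sqrt{B}$ collected out front. The main technical point to verify is the $SO(n)$-orbit argument that identifies the rescaled kernel with the $B$ written in the statement: the natural change of variables produces $G(\hat\eta + v_k)$ while $B$ is defined using $G(\hat\eta - v_k)$, and these agree once one composes with a rotation in $SO(n)$ sending $\hat\eta$ to $-\hat\eta$, which exists whenever $n\ge 2$. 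After this identification, finiteness and interchange of integration are automatic from the assumption $B<\infty$ combined with the nonnegativity of $G$ throughout.
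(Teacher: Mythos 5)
Your proposal follows exactly the route the paper intends --- its own proof of this corollary is the single sentence ``follow the argument of Theorem~\ref{thm:diagonal}'' --- and the main chain is right: duality, Fubini to the kernel $K(z)=\int_{\real^{mn}} G(z+u_1,\ldots,z+u_m)G(u_1,\ldots,u_m)\,du$, the joint dilation with all $\delta_k=|z|$ extracting $|z|^{mn-2(mn-\alpha)}=|z|^{-2n/q}$, diagonal $SO(n)$-invariance (for $n\ge2$) making the remaining factor independent of the direction $\hat z$, and then the sharp classical Hardy--Littlewood--Sobolev constant exactly as at the end of Theorem~\ref{thm:diagonal}.

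The one step that does not work as you state it is the identification of the rescaled constant with the $B$ of the statement. A rotation $R$ with $R\hat\eta=-\hat\eta$ proves $\int G(\hat\eta+v)G(v)\,dv=\int G(-\hat\eta+v)G(v)\,dv$, i.e.\ that the quantity is unchanged under $\hat\eta\mapsto-\hat\eta$; but what you need is $\int G(\hat\eta+v)G(v)\,dv=\int G(\hat\eta-v)G(v)\,dv$, and passing between these requires negating the integration variable inside one factor only, i.e.\ invariance of $G$ under the simultaneous antipodal map $-I$ on $(\real^n)^m$. Since $-I\in SO(n)$ precisely when $n$ is even, your rotation argument settles the identification only for even $n$, or when $G$ is even --- which is automatic in Theorem~\ref{thm:diagonal} because $\rho(-x_1,\ldots,-x_m)=\rho(x_1,\ldots,x_m)$, but is not forced by diagonal $SO(n)$-invariance plus homogeneity when $n$ is odd (such a $G$ may contain determinant-type invariants of the unit vectors). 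In that case the constant your computation actually produces is the autocorrelation $\int G(\hat\eta+y_1,\ldots,\hat\eta+y_m)G(y_1,\ldots,y_m)\,dy$, which need not coincide with the convolution form displayed in the statement; the inequality \eqref{eq:Cor-dilation} with the autocorrelation constant is still sharp by your argument, and the two constants agree under the additional (and presumably intended) assumption that $G$ is even, e.g.\ if one reads the rotation hypothesis as $O(n)$-invariance. Apart from this sign/evenness point --- on which the paper's one-line proof is silent --- your write-up is a faithful and complete rendering of the intended argument.
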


\begin{proof} 
Follow directly the previous argument given above for Theorem~\ref{thm:diagonal}.
\end{proof}

%%%%%%%%
\section{Multilinear Integrals and Rearrangement}

A central focus for the methods developed here has been: 
(1)~demonstration of estimates for multilinear multidimensional forms that arise from duality 
arguments, and 
(2)~calculation of optimal constants or norms in select cases. 
Classic examples beyond Hardy's inequality involve:
\begin{itemize}
\item[{}] Young's inequality:\quad $f\rightsquigarrow f*g$
\item[{}] Hardy-Littlewood-Sobolev inequality:\quad $f\rightsquigarrow |x|^{-\lambda} * f$
\item[{}] Stein-Weiss fractional integral:\quad $f\rightsquigarrow |x|^{-\beta} (|x|^{-\lambda} * |x|^{-\alpha}f)$
\end{itemize}
Representation of norm estimates in terms of multilinear integral functionals allows application 
of equimeasurable rearrangement techniques to obtain improved inequalities.
Insight was first gained from the Riesz-Sobolev Rearrangement Inequality:
\begin{equation}\label{eq:RS-rearrangement}
\int_{\real^n\times\real^n} f(x) g(x-y) h(y)\,dx\,dy
\le \int_{\real^n\times\real^n} f^* (x) g^* (x-y) h^* (y) \,dx\,dy
\end{equation}
where $f,g,h\ge 0$ and $f^*$ denotes the equimeasurable radial decreasing rearrangement of 
$f$ on $\real^n$. 
The proof based on the Brunn-Minkowski inequality allowed a much more general formulation 
given by Brascamp, Lieb and Luttinger \cite{BLL}: for non-negative functions $f_k$, $k=1,\ldots,M$
\begin{equation}\label{eq:BLL}
\int_{\real^n\times \cdots\times \real^n} \prod \ f_k \Big( \sum a_{kj} x_j\Big)\, dx_1 \cdots dx_m
\le \int_{\real^n\times \cdots\times\real^n} \prod f_k^* \Big( \sum a_{kj} x_j\Big)\,dx_1\cdots dx_m
\end{equation}
Some restrictions will be necessary to ensure finite values for the integrals.
Brascamp and Lieb used this rearrangement improvement combined with an iterative 
high-dimensional asymptotic argument to show that the optimal bound for this integral 
under constraints given by Lebesgue space norms for the $\{f_k\}$ will either be attained 
for gaussian functions  or by limits obtained using gaussian functions if the value is finite. 
By applying this technique to a characteristic example for the integral given by 
\eqref{eq:BLL} and suggested in part by calculations from \cite{C}, a multilinear 
generalization is obtained for the classic Young's inequality
$$f \rightsquigarrow \varphi * f\ ,\quad 
\| \varphi * f\|_{L^p (\real^n)} \le \|\varphi\|_{L^1(\real^n)} \|f\|_{L^p (\real^n)}$$
where equality for this estimate is never attained for $1<p<\infty$.

\begin{thm}[Multilinear Young's Inequality]
\label{thm:Young-not-attained}
For $f\in L^p (\real^n)$ and $g\in L^q (\real^n)$, $f,g\ge 0$ with $1< p \le m$ and 
$1/p' = (m-1)/2q$
\begin{gather}
\int_{\real^{mn}} \prod f(x_k) \prod_{i<j} g (x_i - x_j) \,dx_1 \cdots  dx_m 
\le C \Big[ \|f\|_{L^p (\real^n)}\Big]^m \Big[ \|g\|_{L^q (\real^n)}\Big]^{m(m-1)/2}
\label{eq:Y-not-attaned}\\
\noalign{\vskip6pt}
C = \sup_{0<\rho } \left[ \frac{\rho^{m/p'}}{(1+m\rho)^{m-1}}\right]^{n/2} 
\left[ p^{m/p} q^{m(m-1)/2q}\right]^{n/2} \nonumber
\end{gather}
If $p=m$, then $C= [m/2^{m-1}]^{n/2}$ and equality is never attained.
\end{thm}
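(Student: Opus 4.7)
The plan is to follow the Brascamp–Lieb template already invoked earlier in the paper: first reduce the integral to one over radial decreasing functions, then reduce further to the case where $f$ and $g$ are gaussians, and finally carry out the gaussian calculation and optimization.

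First I would rewrite the integrand in the form
\[
 \prod_{k=1}^{m} f(x_k) \prod_{i<j} g(x_i-x_j) = \prod_{\ell=1}^{M} F_\ell\!\Big(\sum_{j} a_{\ell j} x_j\Big),
\]
with the $f$-factors corresponding to coordinate projections $x_k$ and the $g$-factors to differences $x_i-x_j$, which is exactly the form required by the Brascamp–Lieb–Luttinger rearrangement inequality \eqref{eq:BLL}. Thus I may assume $f=f^{\ast}$ and $g=g^{\ast}$ are non-negative, radial, decreasing; in particular the integral is non-decreased. Next I would invoke the iterated high-dimensional gaussian-reduction argument of Brascamp–Lieb (the same device applied after Theorem~\ref{thm:Young} and in Corollary~\ref{cor:optimal}) to conclude that the supremum of the ratio
\[
 \int_{\real^{mn}} \prod f(x_k)\prod_{i<j} g(x_i-x_j)\,dx \, \Big/\, \|f\|_{L^p}^m\,\|g\|_{L^q}^{m(m-1)/2}
\]
is either attained on centered gaussians $f(x)=e^{-\alpha|x|^2}$, $g(x)=e^{-\beta|x|^2}$, or realized as a limit of such.

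With this reduction the problem becomes an exercise in gaussian calculus. The quadratic form in the exponent is
\[
 \alpha\sum_k |x_k|^2 + \beta\sum_{i<j} |x_i-x_j|^2 = \alpha\,|u_1|^2 + (\alpha+m\beta)\sum_{\ell=2}^{m} |u_\ell|^2,
\]
where $u_1=(x_1+\cdots+x_m)/\sqrt m$ and $u_2,\ldots,u_m$ span the orthogonal complement in each of the $n$ coordinate directions; this uses the identity $\sum_{i<j}|x_i-x_j|^2 = m\sum_k|x_k|^2 - |\sum_k x_k|^2$. Computing the gaussian integral by this diagonalization gives
\[
 \int_{\real^{mn}} e^{-\alpha\sum|x_k|^2-\beta\sum_{i<j}|x_i-x_j|^2}\,dx
 = \pi^{mn/2}\,\alpha^{-n/2}(\alpha+m\beta)^{-(m-1)n/2},
\]
while $\|f\|_{L^p}=(\pi/(p\alpha))^{n/(2p)}$ and $\|g\|_{L^q}=(\pi/(q\beta))^{n/(2q)}$. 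Forming the ratio, the constraint $1/p'=(m-1)/(2q)$ is precisely what makes every power of $\pi$ cancel, and a brief check shows that the remaining expression is homogeneous of degree zero in $(\alpha,\beta)$. Setting $\alpha=1$, $\beta=\rho$ and using $m(m-1)n/(4q) = mn/(2p')$ collapses the ratio to
\[
 \bigl[p^{m/p}\,q^{m(m-1)/(2q)}\bigr]^{n/2}\,\bigl[\rho^{m/p'}(1+m\rho)^{-(m-1)}\bigr]^{n/2},
\]
whose supremum over $\rho>0$ is the constant $C$ in the statement.

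The endpoint case $p=m$ is then a direct consequence. Here $m/p'=m-1$ forces the exponent of $\rho$ in the numerator to equal $(m-1)n/2$, so the bracket becomes $[\rho/(1+m\rho)]^{(m-1)n/2}$, whose supremum $m^{-(m-1)n/2}$ is approached only as $\rho\to\infty$; substitution and the simplification $q=m/2$ give $C=[m/2^{m-1}]^{n/2}$, and the fact that the extremizing sequence necessarily sends $\alpha\to 0$ (so that $f$ degenerates out of $L^p$) shows that equality is never attained. The main obstacle I foresee is checking that the Brascamp–Lieb gaussian reduction actually applies to this particular non-product integrand; once the finiteness of the $B$-type normalization integral and the $L^p/L^q$ constraint on $f,g$ are verified to fit into the BL framework, the rest is bookkeeping with the quadratic form and a one-variable calculus maximization.
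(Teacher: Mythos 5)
Your proposal is correct and follows essentially the same route as the paper: Brascamp--Lieb--Luttinger rearrangement plus the Brascamp--Lieb reduction to gaussians, then the gaussian computation (your diagonalization via the identity $\sum_{i<j}|x_i-x_j|^2=m\sum_k|x_k|^2-|\sum_k x_k|^2$ is just another way of obtaining the paper's determinant $\alpha(\alpha+m\beta)^{m-1}$), the one-variable optimization in $\rho=\beta/\alpha$, and the endpoint check $p=m$, $q=m/2$ giving $C=[m/2^{m-1}]^{n/2}$ with the supremum only approached as $\rho\to\infty$.
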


\begin{proof} 
Let $f(x) = e^{-\pi \alpha x^2}$ and $g(x) = e^{-\pi \beta x^2}$, $\alpha,\beta >0$; 
then to calculate the value of the integral on the left-hand side for gaussian functions, 
we need to find the determinant of the matrix of coefficients corresponding to the gaussian 
function
$$\exp \bigg[ -\pi \big( \alpha + (m-1)\beta\big) \sum x_k^2 + 2\pi \beta \sum_{i<j} x_i x_j\bigg]\ .$$
This only requires computing the determinant of the matrix since a rotation applied to the 
matrix would then display eigenvalues on the diagonal corresponding to a symmetric 
matrix and the determinant is invariant under a rotation 
$$\left[ \begin{matrix} \alpha + (m-1)\beta & - \beta & \cdots & -\beta\\
\noalign{\vskip6pt}
-\beta & \alpha + (m-1)\beta&\cdots & -\beta\\
\noalign{\vskip6pt}
\cdot&\cdot&\cdots&\cdot\\
\noalign{\vskip6pt}
-\beta &-\beta&\cdots&\alpha + (m-1)\beta\end{matrix}\right]$$
$m$ variables $x_k \in \real^n$ results in an $m\times m$ matrix. 
To simplify the calculation, consider the matrix
$$\left[\begin{matrix} \delta& -\beta&\cdots & -\beta\\
\noalign{\vskip6pt}
-\beta & \delta &\cdots & -\beta\\
\noalign{\vskip6pt}
\cdot&\cdot&\cdots&\cdot\\
\noalign{\vskip6pt}
-\beta & -\beta & \cdots &\delta \end{matrix}\right]$$
Add all the columns $(k=2,\ldots,m)$ to the first column; 
then subtract the first row from each successive row:
$$\left[ \begin{matrix} 
\delta - (m-1)\beta &-\beta & \cdots & -\beta\\
\noalign{\vskip6pt}
\delta - (m-1)\beta &\delta & \cdots & -\beta\\
\noalign{\vskip6pt}
\cdot&\cdot&\cdots&\cdot\\
\noalign{\vskip6pt}
\delta - (m-1)\beta &-\beta & \cdots & \delta\end{matrix}\right]
\rightsquigarrow 
\left[ \begin{matrix} 
\delta - (m-1)\beta &-\beta & \cdots & -\beta\\
\noalign{\vskip6pt}
0&\delta +\beta&\cdots&0\\
\noalign{\vskip6pt}
\cdot&\cdot&\cdots&\cdot\\
\noalign{\vskip6pt}
0&0&\cdots&\delta +\beta\end{matrix}\right]$$
the determinant equals $[\delta - (m-1)\beta] (\delta +\beta)^{m-1}$ which for the 
variables $(\alpha,\beta)$ becomes $\alpha (\alpha + m\beta)^{m-1}$. 
Then the value of the integral becomes
$$\Big[ \alpha (\alpha +m \beta)^{m-1}\Big]^{-n/2}$$
So the ratio of this integral to the norms on the right-hand side becomes
$$\left[ \frac{\alpha^{m/p} \beta^{m(m-1)/2q}}{\alpha (\alpha +m\beta)^{m-1}}\right]^{n/2}
\mkern-12mu
\left[ p^{m/p} q^{m(m-1)/2q}\right]^{n/2} 
= \left[ \frac{(\beta/\alpha)^{m/p'}}{(1+m\, \beta/\alpha)^{m-1}}\right]^{n/2}
\mkern-12mu
\left[  p^{m/p} q^{m(m-1)/2q}\right]^{n/2} 
\le C$$
This calculation makes explicit the constraint that $1< p\le m$ or equivalently $m/2q \le 1$.
For $m/2q <1$, the optimal constant is attained for $\beta/\alpha = (2q-m)^{-1}$.
\end{proof}
 
This theorem is the formative step in demonstrating the existence of bounds that lead to 
the proof of a multilinear Hardy-Littlewood-Sobolev inequality that is conformally invariant. 
In extending Theorem~\ref{thm:Young-not-attained} 
to allow the input function $g$ to be replaced by the Riesz potential $|x|^{-\lambda}$ with 
$\lambda = n/q$, then $p$ is restricted to lie below the critical index $1<p <m$, as suggested 
by the lack of extremals at that index in the multilinear Young's inequality. 
It is striking that such a natural extension would preserve conformal symmetry. 

\begin{thm}[Multilinear Hardy-Littlewood-Sobolev Inequality]
\label{thm:M HLS inequality}
For $f\in L^p (\real^n)$, $f\ge 0$, with $1<p<m$ and $2n/p' = (m-1)\gamma$, $\gamma=n/q$ 
\begin{gather}
\int_{\real^{mn}} \prod f(x_k) \prod_{i<j} |x_i - x_j|^{-\gamma} dx_1 \cdots dx_m 
\le C\Big[ \| f\|_{L^p (\real^n)}\Big]^m \label{eq:HLS inequal}\\
\noalign{\vskip6pt}
C = \Big[ (4\pi)^{n/2} \Gamma (n/2)\big/ \Gamma (n)\Big]^{m/p'} 
\int_{S^n \times \cdots\times S^n} \prod_{i<j} |\xi_i - \xi_j|^{-\gamma} 
d\xi_1 \cdots d\xi_m\nonumber
\end{gather}
and extremal functions are given by $f(x) = A (1+|x|^2)^{-n/p}$ up to 
conformal automorphism.
Here $d\xi$ is normalized surface measure.
\end{thm}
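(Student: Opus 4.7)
The plan is to exploit the built-in conformal symmetry of the integrand by transferring the inequality to the sphere $S^n$ via stereographic projection and identifying the constant function on $S^n$ as the extremizer. This strategy simultaneously establishes the bound and recovers $f(x) = A(1+|x|^2)^{-n/p}$ as the extremals, since these are exactly the inverse-stereographic images of constants.

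First I would set up the transfer. Let $\lambda(x) = 2/(1+|x|^2)$ be the conformal factor, so that $d\omega_{S^n} = \lambda(x)^n\,dx$ and the chordal distance obeys $|\xi_i - \xi_j| = |x_i - x_j|\sqrt{\lambda(x_i)\lambda(x_j)}$. Writing $f(x) = F(\xi)\lambda(x)^{n/p}$ with $\xi$ the inverse-stereographic image of $x$ realizes an isometry $\|f\|_{L^p(\real^n)}^p = \int_{S^n}F^p\,d\omega = \omega_n\|F\|_{L^p(S^n)}^p$, with $\omega_n = (4\pi)^{n/2}\Gamma(n/2)/\Gamma(n)$ and normalized measure $d\xi = d\omega/\omega_n$. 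The kernel factors as
\begin{equation*}
\prod_{i<j}|x_i - x_j|^{-\gamma} = \prod_{i<j}|\xi_i - \xi_j|^{-\gamma}\prod_k \lambda(x_k)^{(m-1)\gamma/2}.
\end{equation*}
The critical balance $(m-1)\gamma = 2n/p'$ in the hypothesis is precisely what makes the combined $\lambda(x_k)$-exponent in the integrand equal to $n/p + (m-1)\gamma/2 = n/p + n/p' = n$, so that the $dx_k$ factor absorbs exactly into $d\omega_k$ with no residual. The original inequality is therefore equivalent to the sphere inequality
\begin{equation*}
\int_{(S^n)^m}\prod_k F(\xi_k)\prod_{i<j}|\xi_i - \xi_j|^{-\gamma}\,d\xi_1\cdots d\xi_m \le C_0\,\|F\|_{L^p(S^n)}^m,
\end{equation*}
with $C = \omega_n^{m/p'}C_0$.

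Second, I would identify $F \equiv 1$ as the extremizer on $S^n$. The sphere form is manifestly invariant under the diagonal action of the M\"obius group on $(S^n)^m$: rotations preserve chordal distance, and dilations, translations, and inversion on $\real^n$ pass through the stereographic map and preserve both the original $\real^n$ form of the integral and the $L^p$ norm. Spherical-cap rearrangement of $F$ forces any maximizer to be axisymmetric. A competing-symmetries argument in the spirit of Carlen-Loss -- alternating cap rearrangement with a fixed non-rotational conformal automorphism -- then produces an iteration whose only fixed points are constants, pinning down the extremizer. Inserting $F \equiv 1$ gives $C_0 = \int_{(S^n)^m}\prod_{i<j}|\xi_i - \xi_j|^{-\gamma}\,d\xi$, so $C = \omega_n^{m/p'}C_0$ matches the stated formula verbatim.

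The principal obstacle is precisely this extremizer identification: promoting axisymmetry to the constant really requires the conformal symmetry, and bare rearrangement alone is not sufficient. Finiteness of the sphere kernel integral and of the bound itself can be obtained from Theorem \ref{thm:Young-not-attained} by approximating the Riesz potential $|x|^{-\gamma}$ by a family of gaussians (or by smooth truncations) and passing to the limit via monotone convergence; the critical balance of exponents ensures the limit stays finite. The strict subcriticality $1 < p < m$ is the expected analogue of the non-attainment at $p = m$ in the multilinear Young inequality, reflecting the fact that at the endpoint the extremizer would degenerate.
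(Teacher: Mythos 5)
Your conformal transfer to $S^n$, the bookkeeping of the factor $\omega_n^{m/p'}$, and the idea of identifying constants as extremizers by alternating spherical rearrangement with a conformal automorphism are legitimate, and in fact the paper itself points to exactly this Carlen--Loss route (it invokes it to finish the case $n=1$, while preferring for $n\ge2$ an equivalent hyperbolic-space formulation, a Riesz--Sobolev rearrangement inequality on $\HH^\ell$, and competing radial/axial symmetry to characterize the extremals). So the second half of your plan is a genuinely viable alternative to the paper's geometric argument, modulo the standard caveats that the competing-symmetries machinery needs a multilinear spherical rearrangement inequality (two-point symmetrization handles $\prod f(\xi_k)\prod_{i<j} g(|\xi_i-\xi_j|)$) and, for the full statement ``extremals only up to conformal automorphism,'' an equality analysis beyond mere convergence of the iteration.

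The genuine gap is your finiteness step. Since $\gamma=n/q$, one has $\gamma q=n$ exactly, so $|x|^{-\gamma}$ lies in weak-$L^q(\real^n)$ but not in $L^q(\real^n)$: the integral of $|x|^{-\gamma q}=|x|^{-n}$ diverges both at the origin and at infinity. Consequently any monotone gaussian or truncation approximation $g_\epsilon\nearrow|x|^{-\gamma}$ has $\|g_\epsilon\|_{L^q}\to\infty$, Theorem~\ref{thm:Young-not-attained} then produces bounds that blow up, and monotone convergence only transports a divergent bound --- the ``critical balance of exponents'' is precisely the scale-invariant situation in which Young's inequality fails and must be upgraded, so your claim that it ``ensures the limit stays finite'' is backwards. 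This matters doubly, because the Carlen--Loss iteration needs continuity (or at least a priori boundedness) of the functional on $L^p$, so without a separate boundedness proof the extremizer identification is also unsupported. The paper closes this gap differently: after Brascamp--Lieb--Luttinger rearrangement it reduces to radial decreasing $f$ with inversion symmetry, passes to $h(t)=|x|^{n/p}f$, $|x|=e^t$, so the kernel becomes $[\cosh(t_i-t_j)-\xi_i\cdot\xi_j]^{-\gamma/2}$, which has only a local singularity and exponential decay; then, using the strict inequality $p<m$, it chooses $p_*\in(p,m)$ and $q_*<q$ with $\gamma q_*<1$ so that $(\cosh t-1)^{-\gamma/2}\in L^{q_*}(\real)$ and Theorem~\ref{thm:Young-not-attained} applies at the perturbed index to $h\in L^{p_*}(\real)$ (available since $h\in L^p\cap L^\infty$). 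Some device of this kind --- exploiting the strict subcriticality $p<m$ and the improved decay after the exponential change of variables, or else a weak-type/HLS-type input rather than Young --- is needed before your sphere argument can start.
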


\begin{proof} 
This result without the sharp constant and in one dimension is given in \cite{C} (see section~2) 
using an interpolation argument. 
Observe that the condition $0<\gamma < 2n/m$ follows from the restriction on the 
index $1 < p< m$.
The intertwined singularity structure can be viewed as an obstruction in determining 
conditions for the integral to be bounded. 
Here rearrangement, dilation invariance,   conformal symmetry and %$SL(2,\real)$ invariance  
axial representation are applied explicitly 
to obtain the inequality, and the background role of the result from 
Theorem~\ref{thm:Young-not-attained} guarantees integrability of the singularity.

The nature of the proof follows the classic steps from the original arguments given 
to obtain sharp constants for the Hardy-Littlewood-Sobolev inequality and the 
Moser-Trudinger inequality (see \cite{Beckner-Annals93}, \cite{Beckner-Essays}, 
\cite{Beckner-JFAA}, and \cite{Lieb83}): 
existence of bounds, realization with inversion symmetry on the multiplicative group 
$\real_+$, existence of extremal functions, determination of optimal constants using 
spherical symmetry, and $SL(2,\real)$ invariance on hyperbolic space. 

For determination of bounds, the proof will be developed in two parts --- first for $n=1$, 
and then $n>1$ where integration over the sphere $S^{n-1}$ comes into play. 
By applying the Brascamp-Lieb-Luttinger rearrangement inequality, the estimate is 
improved when $f$ is a radial decreasing function. 
Then set $h(y) = |x|^{n/p} f(x)$ where $y = |x|$; notice that $h$ is bounded so that 
$h\in L^p (\real_+) \cap L^\infty (\real_+)$. 
The resulting inequality becomes 
\begin{gather} 
\int_{(\real_+)^m \times (S^{n-1})^m} 
\prod h (y_k) \prod_{i<j} \left[ \frac{y_i}{y_j} + \frac{y_j}{y_i} - 2\xi_i \cdot \xi_j\right]^{-\gamma/2}
\frac{dy_1}{y_1} \cdots \frac{dy_m}{y_m} \ d\xi_1 \cdots d\xi_m \nonumber\\
\noalign{\vskip6pt}
\le C\Big[ \|h\|_{L^p (\real_+)}\Big]^m 
\label{eq:radial decreasing}
\end{gather}
Because the ``potentials'' are now symmetric decreasing away from the origin $\{y=1\}$, 
the Brascamp-Lieb-Luttinger inequality can be applied to improve the inequality 
so that $h(1/y) = h(y)$ and $h(y)$ is monotone decreasing for $y>1$. 
This step then implies that inequality \eqref{eq:HLS inequal} is improved if 
(1)~$f$ is radial decreasing, 
(2)~$|x|^{n/p} f(x)$ is decreasing for $|x|>1$, 
and (3)~$f(|x|^{-1}) = |x|^{2n/p} f(|x|)$. 
These conditions are precisely what is meant  by saying that $f$ possesses an ``inversion 
symmetry.'' 
Set $y= e^t$; then \eqref{eq:radial decreasing} becomes
\begin{gather}
2^{-mn/2p'} \int_{\real^m \times (S^{n-1})^m} 
\prod h(t_k) \prod_{i<j} \Big[ \cosh (t_i - t_j) - \xi_i \cdot \xi_j\Big]^{-\gamma/2} 
dt_1 \cdots dt_m\ d\xi_1 \cdots d\xi_m \nonumber\\
\le C\Big[ \|h\|_{L^p (\real)}\Big]^m   \label{eq:radial decrease2} 
\end{gather}
Here the objective first is to show existence of a bound and second to show 
existence of extremals.
This theme --- to show first bounds and then second existence of limits --- 
in many essential ways characterizes the objectives in Stein's classic text on 
{\em Singular integrals} (\cite{Stein70}). 
Observe that the function $h$ on $\real$ inherits the bounds of $h$ on $\real_+$
(a simple change of variables) and these bounds for $h\in L^p (\real) \cap L^\infty (\real)$ 
are controlled by the value $\|f\|_{L^p (\real^n)}$. 
For the one-dimensional setting where $S^{n-1} = \{\pm 1\}$, 
the simplest estimate is that 
$$\Big[ \cosh (t_i - t_j) - \xi_i \xi_j\Big]^{-\gamma/2} 
\le \Big[ \cosh (t_i - t_j) - 1\Big]^{-1/(p'(m-1))}$$
so the inequality requires an estimate for 
\begin{equation}\label{eq:Stein} 
2^{m(1- 1/2p')} \int_{\real^m} \prod h (t_k) \prod_{i<j} 
\Big[ \cosh (t_i - t_j)-1\Big]^{-\gamma/2}  dt_1 \cdots dt_m 
\end{equation}
Since $1< p< m$, choose $p < p_* <m$; since $h\in L^p (\real)\cap L^\infty (\real)$, 
$h\in L^{p_*} (\real)$. 
Define $q_*$ by $1/p'_* = (m-1)/2q_*$; since $p'_* < p'$, then $q_* < q$. 
Now claim that $g(t) = (\cosh t-1)^{-\gamma/2} \in L^{q_*} (\real)$ since 
$\gamma q_* = p'_* /p' <1$ so that by the Multilinear Young's Inequality 
(Theorem~\ref{thm:Young-not-attained}), the integral given by \eqref{eq:Stein} is 
bounded in terms of $[\|f\|_{L^p (\real)}]^m$. 

To address the question of boundedness in higher dimensions for 
$$\Lambda (h) = \int_{\real^m \times (S^{n-1})^m} 
\prod h(t_k) \prod_{i<j} \Big[ \cosh (t_i - t_j) - \xi_i \cdot \xi_j\Big]^{-\gamma/2} 
dt_1 \cdots dt_m\ d\xi_1 \cdots d\xi_m$$
observe that 
$$\Big[ \cosh (t_i - t_j) - \xi_i \cdot \xi_j\Big]^{-n/p'(m-1)} 
\le A \Big[ \cos (t_i - t_j) -1\Big]^{-1/p'(m-1)} 
\Big[ 1- \xi_i \cdot \xi_j\Big]^{-(n-1)/p'(m-1)}$$
Then the integration splits and the ``$t$-integration'' is handled as in the one-dimensional 
case above. 
The integral over the $(n-1)$-dimensional sphere is bounded by again applying 
Theorem~\ref{thm:Young-not-attained}. 
\end{proof}

\begin{Lem} 
For $1< p < m$ and $2N/p' = (m-1)\gamma$, $\gamma = N/q$, $N\ge 1$ the following 
integral is bounded 
\begin{equation}\label{Lem:bounded} 
\int_{S^N\times\cdots\times S^N} \prod_{i<j} |\xi_i - \xi_j|^{-\gamma} 
d\xi_1 \cdots d\xi_m \le C
\end{equation}
\end{Lem}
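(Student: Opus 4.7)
The plan is to transport the sphere integral to an equivalent multilinear HLS integral on $(\real^N)^m$ via stereographic projection, and then apply Theorem~\ref{thm:M HLS inequality}.

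First I would use inverse stereographic projection $y \mapsto \xi$ from $\real^N$ to $S^N$, via the standard identities $|\xi_i - \xi_j|^2 = 4|y_i - y_j|^2 / [(1+|y_i|^2)(1+|y_j|^2)]$ and $d\xi = [2/(1+|y|^2)]^N\, dy$. Each $y_k$ appears in exactly $m-1$ pairs, so the distance factors place a cumulative exponent $(m-1)\gamma/2$ on $(1+|y_k|^2)$, which combines with the exponent $-N$ from $d\xi_k$ to give net exponent $(m-1)\gamma/2 - N$. The hypothesis $2N/p' = (m-1)\gamma$ simplifies this to $N/p' - N = -N/p$. Up to a harmless multiplicative constant the sphere integral therefore equals
\[
\int_{(\real^N)^m} \prod_{k=1}^{m} (1+|y_k|^2)^{-N/p} \prod_{i<j} |y_i - y_j|^{-\gamma}\, dy_1 \cdots dy_m.
\]

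Next I would set $f(y) = (1+|y|^2)^{-N/p}$, which lies in $L^p(\real^N)$ since $\|f\|_{L^p}^p = \int(1+|y|^2)^{-N}\,dy$ converges (the integrand decays like $|y|^{-2N}$). The displayed expression is precisely the left-hand side of Theorem~\ref{thm:M HLS inequality}, which then dominates it by a finite multiple of $[\|f\|_{L^p(\real^N)}]^m$, yielding the lemma.

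The point to handle will be avoiding a circular invocation, since Theorem~\ref{thm:M HLS inequality} is stated with an explicit constant that itself involves the sphere integral under consideration. Only the qualitative existence of \emph{some} finite bound is needed here, and that part of the theorem was established via the Multilinear Young's inequality (Theorem~\ref{thm:Young-not-attained}) through reductions to the real line, which does not reference the sphere integral. The lemma then certifies that the explicit expression for the constant in Theorem~\ref{thm:M HLS inequality} is well-defined, reflecting the fact that under stereographic projection the extremals $(1+|x|^2)^{-N/p}$ on $\real^N$ correspond to constant functions on $S^N$.
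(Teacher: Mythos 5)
Your stereographic-projection computation is correct: with $|\xi_i-\xi_j|^2 = 4|y_i-y_j|^2/[(1+|y_i|^2)(1+|y_j|^2)]$ and $d\xi \propto (1+|y|^2)^{-N}dy$, the hypothesis $2N/p'=(m-1)\gamma$ turns the sphere integral into the multilinear HLS functional applied to $f(y)=(1+|y|^2)^{-N/p}\in L^p(\real^N)$; this is the same change of variables the paper itself uses to pass between \eqref{eq:HLS inequal} and \eqref{eq:HLS inequal2}. The problem is how you discharge the circularity. Your claim that the boundedness part of Theorem~\ref{thm:M HLS inequality} ``does not reference the sphere integral'' is only true for $n=1$. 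For $n\ge 2$ the paper's boundedness argument splits the kernel as $[\cosh(t_i-t_j)-\xi_i\cdot\xi_j]^{-\gamma/2}\le A[\cosh(t_i-t_j)-1]^{-1/(p'(m-1))}[1-\xi_i\cdot\xi_j]^{-(n-1)/(p'(m-1))}$ and must then bound $\int_{(S^{n-1})^m}\prod_{i<j}(1-\xi_i\cdot\xi_j)^{-(n-1)/(p'(m-1))}\,d\xi$; since $1-\xi_i\cdot\xi_j=|\xi_i-\xi_j|^2/2$, that is exactly the present Lemma with $N$ replaced by $n-1$ (same $p$, $\gamma'=2(n-1)/(p'(m-1))$). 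So, as written, your proof of the Lemma for $S^N$ invokes a theorem whose proof invokes the Lemma for $S^{N-1}$. This is repairable --- run an induction on $N$, with base case $N=1$, where the HLS bound on $\real$ genuinely uses only the real-line reduction and Theorem~\ref{thm:Young-not-attained} --- but the induction must be stated; without it the argument is circular, and it is circular precisely because this Lemma is what underwrites both the $n\ge2$ boundedness step and the finiteness of the constant $C$ in the theorem.

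Compare with the paper's proof, which avoids the issue in one step and keeps the logical order Lemma $\Rightarrow$ Theorem: apply Theorem~\ref{thm:Young-not-attained} directly (not the HLS theorem), with $f$ constant on a bounded set and an auxiliary index $p_*$ with $p<p_*<m$; defining $q_*$ by $1/p_*'=(m-1)/(2q_*)$ one gets $\gamma q_* = N p_*'/p' < N$, so $|\xi-\eta|^{-\gamma}$ lies in $L^{q_*}$ locally and the multilinear Young bound yields the finiteness of the sphere integral outright. If you keep your route, either add the dimensional induction explicitly or replace the appeal to Theorem~\ref{thm:M HLS inequality} by this direct appeal to Theorem~\ref{thm:Young-not-attained}; your reduction then survives as a clean alternative bookkeeping of the same estimate.
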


\begin{proof} 
Apply Theorem~\ref{thm:Young-not-attained} in the case where $f$ is constant with 
bounded support for the index $p_*$, $p< p_* <m$. 
Then $|\xi-\eta|^{-\gamma}$ as a function of $\xi$ is in $L^{q^*} (S^N)$ where
$2/p'_* = (m-1)/q_*$ and $\gamma q^* < N$.
\end{proof}

Now having determined that the integral $\Lambda (h)$ is bounded for $\|h\|_{L^p(\real^*)}=1$, 
it follows directly that an extremal function exists for this functional where the ``best constant''
$C$ is attained (see the nature of the argument given on page~40 in \cite{Beckner-Essays}).
Choose a sequence $\{ h_k\}$ with $\|h_k\|=1$ so that $\Lambda (h_k) \rightsquigarrow C$; 
the $h_k$ will be symmetric decreasing and uniformly bounded from the earlier application 
of the Brascamp-Lieb-Luttinger rearrangement inequality. 
Since the functions $\{h_k\}$ are decreasing, the Helly selection principle can be applied 
to choose a subsequence that converges almost everywhere to a function $h\in L^p(\real)$ 
with $\|h\|_p \le 1$ and $\Lambda (h) \le C$.
Since $h_k$ has a uniform functional bound 
$$h_k(t) \le \Lambda \Big(1+ |t|^{n/p}\Big)^{-1} = u(t)$$
and the argument used above to show that the integral given by \eqref{eq:Stein} is bounded 
in terms of $L^p$ norms now shows that $\Lambda (u)$ is bounded and hence the 
integrand for the integrals $\Lambda (h_k)$ has a uniform $L^1 (\real^m \times (S^{n-1}))$ 
majorant so that the dominated convergence theorem implies that 
$$\Lambda (h_k) \longrightarrow \Lambda (h)\ ,\qquad k\to \infty$$
and hence $\Lambda (h) =C$ and $h$ is an extremal function which is symmetric decreasing. 
Tracing back through successive changes of variables, this exhibits a radial decreasing 
extremal function $f$ for inequality \eqref{eq:HLS inequal} which possesses the 
inversion symmetry 
\begin{equation}\label{eq:tracing back} 
f\big(|x|^{-1}\big) = |x|^{2n/p} f(|x|)\ .
\end{equation}

The next objective is to characterize the extremal functions, and show that up to conformal 
automorphism an extremal function for inequality \eqref{eq:HLS inequal} is given by 
conformal factors 
$$f(x) = \Lambda \big(1+ |x|^2\big)^{-n/p}\ .$$
Conformal invariance allows the inequality to be transferred to the sphere $S^n$. 
The equivalence between $\real^n$ and $S^n -\{\text{pole}\}$ is given by:
\begin{gather*}
\xi = \left( \frac{1-|x|^2}{1+|x|^2} , \frac{2x}{1+|x|^2} \right)\ ,\qquad 
d\xi = \pi^{-n/2} \left[ \frac{\Gamma (n)}{\Gamma (n/2)}\right] \big(1+|x|^2\big)^{-n}\, dx\\
\noalign{\vskip6pt}
|x-y| = \frac12 |\xi-\eta| \Big[ \big(1+ |x|^2\big) \big(1+ |y|^2\big)\Big]^{1/2}
\end{gather*}
where $d\xi$ corresponds to normalized surface measure on the sphere. 
By setting $F(\xi) = (1+ |x|^2)^{n/p} f(x)$, then an inequality on $S^n$ equivalent 
to \eqref{eq:HLS inequal} becomes: 
\begin{gather} 
\int_{S^n\times S^n} \prod F(\xi_k) \prod_{i<j} |\xi_i - \xi_j|^{-\gamma}\, d\xi_1\cdots d\xi_m
\le D\Big[ \|F\|_{L^p (S^n)}\Big]^m \label{eq:HLS inequal2}\\
\noalign{\vskip6pt}
D = C\Big[ (4\pi)^{-n/2} \Gamma(n) \,\big\slash\, \Gamma (n/2)\Big]^{m/p'}\nonumber
\end{gather}
So if extremal functions are given by $(1+|x|^2)^{-n/p}$, then the constant $C$ for 
Theorem~\ref{thm:M HLS inequality} is given by 
$$C = \Big[ (4\pi^{n/2} \Gamma (n/2) \,\big\slash\, \Gamma (n)\Big]^{m/p'} 
\int_{S^n\times \cdots \times S^n} 
\prod_{i<j} |\xi_i - \xi_j|^{-\gamma} d\xi_1 \cdots d\xi_m\ .$$

To demonstrate that extremal functions for $n\ge 2$ are given by $A (1+|x|^2)^{-n/p}$ up 
to conformal automorphism, an equivalent realization for the multilinear 
Hardy-Littlewood-Sobolev inequality on hyperbolic space is used. 
It's very natural to recognize the conformal equivalence for the classical  geometries 
--- the plane, the sphere and the two-sheeted hyperboloid, the latter corresponding to 
the homogeneous space for the group $SO(n,1)$. 
But the equivalence for the standard Liouville-Beltrami upper half-space model 
for hyperbolic space represents a more subtle dilation invariance which utilizes axial symmetry.
More striking is that the Hardy-Littlewood-Sobolev inequality on $\real^n$ for $n\ge 2$ 
(Theorem~\ref{thm:M HLS inequality}) 
has an equivalent formulation on the hyperbolic space $\HH^\ell$ for $2\le \ell\le n$ 
which depends on the dimension of the submanifold used for the axial symmetry. 

The Liouville-Beltrami upper half-space model includes the Lie group structure of 
affine mappings and incorporates ``axial symmetry'' from the Euclidean picture. 
Here 
$$
w = (x,y) \in \real_+^\ell \simeq \real^{\ell-1} \times \real^+ \simeq SO(\ell,1) \,\big\slash\, 
SO(\ell) \simeq \HH^\ell$$
with the Poincar\'e distance
$$d(w,w') = \frac{|w-w'|}{2\sqrt{yy'}}$$
and left-invariant Haar measure $d\nu = y^{-\ell} \,dy\,dx$. 
The Riemannian metric on $\HH^\ell$ is defined by
$$ds^2 = y^{-2} (dx^2 + dy^2)$$ 
and the invariant gradient is given by $D = y\nabla$. 
The group structure of hyperbolic space corresponds to a non-unimodular Lie group 
that is an extension of the affine ``$ax+b$ group.'' 
Hyperbolic space $\HH^\ell$ is identified with the subgroup of $SL (\ell,\real)$ given by 
all matrices of the form 
$$\root \ell \of y \left(\begin{matrix} \text{\bf 1}& x/y\\ 
\noalign{\vskip6pt}
0&1/y\end{matrix}\right)$$
where $x\in \real^{\ell-1}$ is represented by a column vector and $y>0$. 
Such matrices can act via fractional linear transformation on $\real_+^\ell \simeq \HH^\ell$
$$w = x+iy\xi \in \real_+^\ell \longrightarrow \frac{Aw+B}{Cw+D}$$
for a matrix $\Big( {A\atop C}\ {B\atop D}\Big)$ 
where $A = (\ell-1)\times (\ell-1)$ matrix, 
$B = (\ell-1) \times 1$ matrix, 
$C = 1\times (\ell-1)$ matrix, 
$D = |x|$ matrix and fixed non-zero $\xi \in \real^{\ell-1}$. 
The group action here corresponds to the multiplication rule 
$$(x,y) (u,v) = (x+yu, yv)$$
for $x,u \in \real^{\ell-1}$ and $y,v >0$ so this is again a non-unimodular group with 
the modular function $\Delta (x,y) = y^{-(\ell-1)}$. 
The group identity is $\widehat 0  = (0,1)$.

Following the argument given in \cite{Beckner-JFAA}, consider non-negative 
radial functions $f$ in equation~\eqref{eq:HLS inequal} and set 
$y= |x'|$ where 
$x\in \real^n = (u,x') \rightsquigarrow
w = (u,y) \in \real^{\ell-1} \times \real_+ \simeq \HH^\ell$. 
Let $F(u,y) = y^{n/p} f(u,x')$; then 
$$\|f\|_{L^p (\real^n)} 
= \left[ \frac{2\pi^{n-\ell+1}}{\Gamma [(n-\ell+1)/2]}\right]^{1/p} \|F\|_{L^p (\HH^\ell)}$$
and equation~\eqref{eq:HLS inequal} 
results in the family of estimates for $2\le \ell \le n$;
\begin{gather}
\int_{(\HH^\ell)^m} \prod F(w_k) \int_{(S^{n-\ell})^m} 
\prod_{i<j} \left[ d^2 (w_i,w_j) + (1-\xi_i \cdot \xi_j)/2\right]^{-\gamma/2} 
d\xi_1 \cdots d\xi_m\ d\nu_1 \cdots d\nu_m\nonumber\\
\le C\left[ \frac{\pi^{n-\ell+1}}{2^{n-1} \Gamma [(n-\ell+1)/2]}\right]^{-m/p'} 
\Big[ \|F\|_{L^p (\HH^\ell)} \Big]^m
\label{eq:family}
\end{gather}
where $C$ is the optimal constant for equation~\eqref{eq:HLS inequal}. 
The multiplicity of ``fractional integral'' inequalities with varied parameters corresponds 
to earlier results obtained for embedding estimates and Sobolev inequalities on 
hyperbolic space (see \cite{Beckner-JFAA}, \cite{Beckner-PAMS01}). 

The Brascamp-Lieb-Luttinger rearrangement inequality and the nature of the 
multiplication rule for group action on $\HH^\ell$ result in a Riesz-Sobolev rearrangement 
inequality on hyperbolic space. 
This general result is presented here in the reduced form that is needed for 
determining the form of the extremal functions for Theorem~\ref{thm:M HLS inequality}.

\begin{Lem}[Riesz-Sobolev Inequality on Hyperbolic Space] 
Let $\{ F_k\}$ be a sequence of non-negative functions on $\HH^n$ and $\{g_{ij}\}$ 
a sequence of non-negative decreasing functions on $\real_+$. 
Then 
\begin{gather} 
\int_{(\HH^n)^m} \prod F_k (w_k) \prod_{i<j} g_{ij} \Big[ d(w_i,w_j)\Big] \,dw_1 \cdots dw_m
\hskip1truein
\nonumber\\
\le \int_{(\HH^n)^m} \prod F_k^* (w-k) \prod_{i<j} g_{ij} \Big[ d(w_i,w_j)\Big] \, dw_1\cdots dw_m
\label{eq:lem-RS}
\end{gather}
where $F^*$ corresponds to the equimeasurable radial  decreasing rearrangement on 
$\HH^n$ of $F$ with respect to the distance function $d(w,\widehat 0\,)$ where for 
$w = (u,y)$
$$1+ 2d^2 (w,\widehat 0\,) = \frac{y^2 +u^2 +1}{2y}\ .$$
\end{Lem}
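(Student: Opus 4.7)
The plan is to adapt the Brascamp--Lieb--Luttinger rearrangement scheme to $\HH^n$ by replacing Steiner symmetrization with polarization across totally geodesic hyperplanes through $\widehat 0$. First I would apply the layer-cake decomposition simultaneously to each $F_k$ and to each decreasing $g_{ij}$, reducing \eqref{eq:lem-RS} to the case in which every $F_k = \chi_{A_k}$ for $A_k\subset\HH^n$ of finite hyperbolic volume and every $g_{ij} = \chi_{[0,r_{ij}]}$. The lemma then becomes the geometric statement that replacing each $A_k$ by the geodesic ball $A_k^\ast$ about $\widehat 0$ of equal hyperbolic volume does not decrease the multilinear integral.

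Next I would establish the polarization step. Given a totally geodesic hyperplane $H$ through $\widehat 0$, let $\sigma$ denote the reflection isometry and $H^\pm$ the two open half-spaces; define the polarization
$$A^H = \bigl[(A\cup\sigma A)\cap H^+\bigr]\ \cup\ \bigl[(A\cap\sigma A)\cap H^-\bigr],$$
which preserves hyperbolic volume and is compatible with symmetric decreasing rearrangement about $\widehat 0$. The crucial geometric input is that for $w_i,w_j\in H^+$ one has $d(w_i,w_j)\le d(w_i,\sigma w_j)$: indeed the geodesic from $w_i$ to $\sigma w_j\in H^-$ must meet $H$ at some point $p$, and the triangle inequality combined with $\sigma$-invariance of distance gives $d(w_i,\sigma w_j)=d(w_i,p)+d(p,w_j)\ge d(w_i,w_j)$. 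Since each $g_{ij}$ is decreasing, this comparison together with the elementary $0/1$ identity $\max(a,a')\max(b,b')+\min(a,a')\min(b,b')\ge ab+a'b'$ shows, by a case analysis over the $2^m$ sign configurations $(\epsilon_1,\ldots,\epsilon_m)\in\{+,-\}^m$ assigning each $w_k$ to $H^+$ or $H^-$, that simultaneously polarizing all $A_k$ across $H$ does not decrease the full multilinear integral.

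Finally I would invoke the Brock--Solynin approximation theorem, whose proof adapts verbatim to $\HH^n$ because $SO(n)$ acts transitively on the totally geodesic hyperplanes through $\widehat 0$ by isometries and the invariant measure is absolutely continuous: iterating polarization over a countable dense family of such hyperplanes produces a sequence that converges in $L^1(\HH^n)$ to $\chi_{A_k^\ast}$. Continuity of the multilinear integrand under such convergence, using dominated convergence after truncating the $g_{ij}$, then yields \eqref{eq:lem-RS}. The main obstacle is the simultaneous polarization of $m\ge 3$ sets: while the one-pair distance comparison and indicator identity are elementary, one must confirm that polarizing all $A_k$ at once remains favorable for the full product integrand, which is accomplished by factoring $\prod_k\chi_{A_k^H}(w_k)$ configuration by configuration and reducing to an independent application of the pairwise inequality on each of the $\binom{m}{2}$ factors $g_{ij}(d(w_i,w_j))$.
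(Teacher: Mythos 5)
Your route (layer cake, polarization across geodesic hyperplanes, Brock--Solynin approximation) is genuinely different from the paper's, which instead applies the Brascamp--Lieb--Luttinger inequality to the integrations in the horospherical Euclidean variables $u_k$ of the half-space model (for fixed $y$'s, $g_{ij}(d(w_i,w_j))$ is a decreasing function of $|u_i-u_j|$) and then iterates such partial rearrangements, Lusternik style, interlaced with changes of variables until the iterates converge to the radial decreasing rearrangement about $\widehat 0$. Your scheme can be made to work, but as written it has a genuine flaw in the approximation step: you restrict to totally geodesic hyperplanes passing through $\widehat 0$. Any function radial about $\widehat 0$ satisfies $F(\sigma w)=F(w)$ for every such reflection, hence is fixed by every such polarization; for instance the indicator of an annulus $\{1\le d(w,\widehat 0)\le 2\}$ is fixed by the whole family yet is not its decreasing rearrangement, so iterated polarization over this family cannot converge to $F^*$. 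The Brock--Solynin/Van Schaftingen approximation requires (a countable dense subfamily of) all geodesic half-spaces $H^+$ whose closure contains $\widehat 0$, most of whose boundary hyperplanes do not pass through $\widehat 0$. Your distance lemma survives unchanged: for $w_i,w_j\in H^+$ one still has $d(w_i,w_j)\le d(w_i,\sigma w_j)$, and for $\widehat 0\in\overline{H^+}$ one also gets $d(\widehat 0,w)\le d(\widehat 0,\sigma w)$, which is the compatibility with rearrangement about $\widehat 0$ that you actually need.

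The second problem is the claim that simultaneous polarization of all $A_k$ reduces to ``an independent application of the pairwise inequality on each of the $\binom{m}{2}$ factors.'' After splitting the integral over the $2^m$ reflection configurations of $(w_1,\ldots,w_m)\in(H^+)^m$, the comparison does not decouple pair by pair. Already for $m=3$, writing $s_{ij}=g_{ij}(d(w_i,w_j))\ge t_{ij}=g_{ij}(d(w_i,\sigma w_j))$, the case in which $A_1$ contains both $w_1$ and $\sigma w_1$ while $A_2,A_3$ each contain exactly one point of their reflection pair, on opposite sides, forces you to verify $s_{23}\,(s_{12}s_{13}+t_{12}t_{13})\ \ge\ t_{23}\,(s_{12}t_{13}+t_{12}s_{13})$, a Chebyshev/rearrangement-type correlation inequality that mixes different factors and is not a product of factorwise comparisons. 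The statement you need --- that polarizing all $A_k$ simultaneously does not decrease $\int\prod_k\chi_{A_k}(w_k)\prod_{i<j}g_{ij}(d(w_i,w_j))$ --- is true, but it is the nontrivial ``master'' polarization inequality for many-body functionals; it must be proved (for example by conditioning on one variable and inducting on $m$, using inequalities of the type just displayed together with $x_1x_2-y_1y_2\ge|x_1y_2-x_2y_1|$) or quoted from the literature (Baernstein--Taylor, Morpurgo, Draghici). With the polarizer family enlarged as above and this master inequality supplied, your layer-cake reduction, the geodesic distance comparison, and the truncation/dominated-convergence passage to the limit are all sound, and you obtain \eqref{eq:lem-RS} by a self-contained argument independent of the paper's BLL-plus-Lusternik sketch.
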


\begin{proof} 
Apply the Brascamp-Lieb-Luttinger rearrangement inequality to the integrations over
the Euclidean variables $\{u_k\}$. 
Use the Lusternik technique to take a sequence of successive rearrangements followed 
by a change of variables so that the sequence of rearranged functions converges 
to the resulting equimeasurable radial decreasing rearrangements of the functions $\{F_k\}$.
\end{proof} 

Apply this Lemma to the left side of equation~\eqref{eq:family} and follow the arguemnt 
back to equation~\eqref{eq:HLS inequal}.
Then there must exist an extremal $f_{\#}$ of the form 
$$f_{\#} (u,y) = y^{-n/p} F^* \left( \frac{y^2 +u^2 +1}{2y}\right)\ ,\qquad y = |x'|$$ 
up to conformal action, the only possible form for $f_{\#}$ is to be radial.
Hence 
$$f_{\#} (x) = f_{\#} (u,y) = y^{-n/p} F^* \left( \frac{y^2 +u^2 +1}{2y}\right)$$
and 
$$f_{\#} (x) = f_{\#} (u,y) = A  ( 1+u^2 + y^2  )^{-n/p} 
= A (1+|x|^2)^{-n/p}\ .$$
This completes proof of Theorem~\ref{thm:M HLS inequality}
when the dimension $n$ is at least two. 
The argument is important because it illustrates how hyperbolic symmetry is embedded 
in the conformal structure of the Riesz functional and the Hardy-Littlewood-Sobolev 
inequality corresponding to Theorem~\ref{thm:M HLS inequality} and 
equation~\eqref{eq:HLS inequal}.
The competing radial and cylindrical symmetry force the extremal to be of the form 
$f(x) = A(1+|x|^2)^{-n/p}$ up to conformal automorphism.
For the one-dimensional case where ``axial symmetry'' is not available, the observation 
by Carlen and Loss that conformal transformations can be chosen that break geodesic 
symmetry so that a sequence of successive rearrangements followed by conformal 
automorphisms will generate an extremal function in the limit (see \cite{CL} 
and page~42 in \cite{Beckner-Essays}) 
can be used to fully complete the proof of Theorem~\ref{thm:M HLS inequality}. 
While simple and elegant, such an argument does not bring out the geometric 
symmetry that is encompassed by the conformal structure. 
Additionally, the arguments here can be used for the case of multiple functions 
$\{f_k\}$, $k=1,\ldots,m$ in Theorem~\ref{thm:M HLS inequality} 
rather than a single function $f$.\qed

\begin{Rem} 
It's perhaps useful in terms of possible application to quantum dynamical processes 
to consider Coulomb potentials and many body interactions. 
For low dimensions, interesting cases are: $n=3$, $\gamma=1$, $m=4$ and 
$p=2$; $n=4$, $\gamma=2$, $m=3$ and $p=2$.
\end{Rem}

The framework for these arguments can be extended to include multilinear Stein-Weiss
fractional integrals though without the capability to compute sharp constants and extremal 
functions.
The argument including existence for extremals follows from adapting the mechanism of 
the proof used in Theorem~\ref{thm:M HLS inequality}. 
A variant of this result in one dimension is given in \cite{C}. 

\begin{thm}[Multilinear Stein-Weiss Inequality]
\label{thm:Stein-Weiss} 
For $f\in L^p (\real^n)$, $f\ge 0$, $1<p \le m$, $\beta >0$ and $2n/p' = 2\beta + (m-1)\gamma$, 
$\gamma = n/q >0$
\begin{equation}\label{eq:Stein-Weiss} 
\int_{\real^{mn}} \prod \left[ f(x_k) |x_k|^{-\beta}\right] 
\prod_{i<j} |x_i - x_j|^{-\gamma} dx_1 \cdots dx_m 
\le \Big[ \|f\|_{L^p (\real^n)}\Big]^m
\end{equation}
In addition, extremal functions exist for $1<p<m$.
\end{thm}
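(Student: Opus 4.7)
The plan is to adapt the proof mechanism of Theorem~\ref{thm:M HLS inequality}, exploiting the fact that the scaling relation $2n/p' = 2\beta + (m-1)\gamma$ is designed precisely to absorb the Stein--Weiss weights once one passes to logarithmic coordinates. The argument naturally splits into (i) a radial reduction and change of variables that brings the weighted integral into the unweighted form treated earlier, (ii) the boundedness estimate via the Multilinear Young's Inequality of Theorem~\ref{thm:Young-not-attained}, and (iii) a compactness argument for existence of extremals that mirrors the one used for the Multilinear HLS inequality.

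First I would apply the Brascamp--Lieb--Luttinger rearrangement inequality to reduce to the case where $f$ is radial decreasing, exactly as in the proof of Theorem~\ref{thm:M HLS inequality}. Writing $x = y\xi$ with $y=|x|$, $\xi\in S^{n-1}$, and setting $h(y) = y^{n/p}f(y)$, I collect the powers of $y_k$ arising from $f(x_k)$, the weight $|x_k|^{-\beta}$, the Jacobian $y_k^{n-1}$, and the factorization
$$|x_i - x_j|^{-\gamma} = (y_iy_j)^{-\gamma/2}\bigl[y_i/y_j + y_j/y_i - 2\xi_i\cdot\xi_j\bigr]^{-\gamma/2}.$$
The net exponent of $y_k$ is $n/p' - \beta - (m-1)\gamma/2$, which vanishes by the hypothesis. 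Substituting $y=e^t$ converts $dy/y$ to $dt$ and reduces the integral, up to a harmless constant $2^{-m(m-1)\gamma/4}$, to
$$\int_{\real^m \times (S^{n-1})^m} \prod h(t_k) \prod_{i<j}\bigl[\cosh(t_i-t_j)-\xi_i\cdot\xi_j\bigr]^{-\gamma/2}\,dt_1\cdots dt_m\,d\xi_1\cdots d\xi_m,$$
i.e., precisely the expression estimated in the proof of Theorem~\ref{thm:M HLS inequality}. Boundedness then follows verbatim by that argument: split the kernel as $[\cosh(t_i - t_j)-1]^{-1/(p'(m-1))}[1-\xi_i\cdot\xi_j]^{-(n-1)/(p'(m-1))}$, pick a comparison exponent $p_*\in(p,m)$ (available since $p\le m$), and apply the Multilinear Young's Inequality (Theorem~\ref{thm:Young-not-attained}) separately on $\real$ and on $S^{n-1}$; the integrability condition $\gamma q_* < 1$ and its angular analogue follow from $p_*<m$.

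For existence of extremals in the range $1<p<m$ I would repeat the compactness argument: a maximizing sequence $\{h_k\}$ can be taken symmetric decreasing by Brascamp--Lieb--Luttinger, yielding a uniform pointwise majorant of the form $C(1+|t|^{n/p})^{-1}$; the Helly selection principle produces an a.e.\ convergent subsequence with limit $h\in L^p(\real)$, $\|h\|_p\le 1$, and the boundedness estimate furnishes an $L^1(\real^m\times(S^{n-1})^m)$ dominant, so dominated convergence gives $\Lambda(h_k)\to\Lambda(h)$ and $h$ is extremal. Tracing the substitutions back produces an extremizing $f$ on $\real^n$.

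The main obstacle is bookkeeping rather than analytic: one must verify carefully that all powers of $y_k$ collapse under the scaling condition $2n/p' = 2\beta + (m-1)\gamma$, and that $p_*$ can indeed be chosen strictly between $p$ and $m$ so that the Multilinear Young's Inequality applies with finite kernel norms. The reason no sharp constant or explicit extremal is claimed is conceptually clear: the Stein--Weiss factor $\prod|x_k|^{-\beta}$ breaks the translation and conformal invariance that, in Theorem~\ref{thm:M HLS inequality}, forced the extremals to be conformal factors $A(1+|x|^2)^{-n/p}$. The competing radial, inversion, and hyperbolic symmetries are no longer all simultaneously available, so the compactness argument survives but the symmetry-based characterization does not.
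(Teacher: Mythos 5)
Your route is the intended one --- Brascamp--Lieb--Luttinger rearrangement, the substitution $h(y)=y^{n/p}f$, logarithmic variables with the powers of $y_k$ collapsing because of $2n/p'=2\beta+(m-1)\gamma$, reduction to the $\cosh$-kernel functional, boundedness via Theorem~\ref{thm:Young-not-attained}, and Helly selection plus dominated convergence for extremals when $1<p<m$ --- but your boundedness step has a genuine gap at the endpoint $p=m$. You insist on a comparison exponent $p_*\in(p,m)$ and assert it is ``available since $p\le m$''; no such $p_*$ exists when $p=m$, yet the inequality is claimed for all $1<p\le m$. The whole point of the hypothesis $\beta>0$ --- and the reason the bound holds on the closed range --- is that it replaces the strict inequality $p<m$ used in Theorem~\ref{thm:M HLS inequality}: since $(m-1)\gamma=2n/p'-2\beta<2n/p'$, the kernel is already subcritical at the original exponent. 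Concretely, take $q_*=(m-1)p'/2$, so that the pair $(p,q_*)$ satisfies the relation $1/p'=(m-1)/2q_*$ of Theorem~\ref{thm:Young-not-attained}; then $\gamma q_*=(m-1)\gamma\, p'/2=n-\beta p'<n$, so $(\cosh t-1)^{-\gamma/(2n)}\sim |t|^{-\gamma/n}$ lies in $L^{q_*}(\real)$ and $[1-\xi\cdot\xi']^{-(n-1)\gamma/(2n)}\sim|\xi-\xi'|^{-(n-1)\gamma/n}$ lies in $L^{q_*}(S^{n-1})$, and the multilinear Young inequality applies directly with $(p,q_*)$ for every $1<p\le m$; no auxiliary $p_*$ is needed. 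Your attribution of the integrability condition to ``$p_*<m$'' rather than to $\beta>0$ is precisely what loses the endpoint.

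A second, smaller slip: you copied the kernel splitting of the HLS proof verbatim, bounding $[\cosh(t_i-t_j)-\xi_i\cdot\xi_j]^{-\gamma/2}$ by $[\cosh(t_i-t_j)-1]^{-1/(p'(m-1))}\,[1-\xi_i\cdot\xi_j]^{-(n-1)/(p'(m-1))}$. In the Stein--Weiss setting the exponents on the right sum to $n/(p'(m-1))>\gamma/2$, so this pointwise bound is false wherever the base exceeds $1$ (for large $|t_i-t_j|$ the left side is of order $e^{-\gamma|t_i-t_j|/2}$, the right side of the strictly smaller order $e^{-|t_i-t_j|/(p'(m-1))}$). Split the actual exponent instead, $\gamma/2=\gamma/(2n)+(n-1)\gamma/(2n)$, which is what the $L^{q_*}$ conditions above refer to. With these two corrections --- apply Theorem~\ref{thm:Young-not-attained} at $(p,q_*)$ using the slack created by $\beta>0$, and split $\gamma/2$ proportionally --- your reduction and your extremal-existence argument for $1<p<m$ go through exactly as in the paper.
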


\begin{Cor}
For $2\le p\le m$, $1/p + 1/p' = 1$
\begin{equation}\label{eq:cor-Stein Weiss}
\Big| \int_{\real^{mn}} \prod \widehat f (x_k) \prod_{i<j} |x_i - x_j|^{-\gamma} 
dx_1 \cdots dx_m\Big| 
\le C\Big[ \| (-\Delta/4\pi^2)^{\beta/2} f \|_{L^{p'} (\real^n)}\Big]^m
\end{equation}
where $[(-\Delta/4\pi^2)^{\beta/2} f]^\wedge = |x|^\beta \widehat f$.
\end{Cor}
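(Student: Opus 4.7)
The plan is to derive the corollary directly from Theorem~\ref{thm:Stein-Weiss} combined with the Hausdorff--Young inequality, using the Fourier-multiplier identity $\widehat{[(-\Delta/4\pi^2)^{\beta/2} f]}(\xi) = |\xi|^\beta \widehat f(\xi)$ that is built into the definition of the fractional Laplacian.

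First I would set $F = (-\Delta/4\pi^2)^{\beta/2} f$, so that $\widehat F(x) = |x|^\beta \widehat f(x)$, and rewrite the integrand appearing in the corollary as
\[
\prod \widehat f(x_k) \prod_{i<j} |x_i - x_j|^{-\gamma} \;=\; \prod \bigl[ |x_k|^{-\beta} \widehat F(x_k) \bigr] \prod_{i<j} |x_i - x_j|^{-\gamma}.
\]
Bringing the absolute value inside and replacing $\widehat F$ by $|\widehat F|$ pointwise, the integral is controlled by the left-hand side of \eqref{eq:Stein-Weiss} applied to the nonnegative function $|\widehat F|$ in place of $f$. The parameter conditions $\beta > 0$, $\gamma = n/q > 0$, and $2n/p' = 2\beta + (m-1)\gamma$ transfer verbatim, and the hypothesis $2 \le p \le m$ of the corollary sits inside the range $1 < p \le m$ demanded by Theorem~\ref{thm:Stein-Weiss}. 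Thus that theorem yields the upper bound $\bigl[\,\|\widehat F\|_{L^p(\real^n)}\,\bigr]^m$.

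The final step is to absorb the Fourier transform by invoking the Hausdorff--Young inequality $\|\widehat F\|_{L^p(\real^n)} \le \|F\|_{L^{p'}(\real^n)}$, which is available precisely when $p \ge 2$. This pinpoints the role of the restriction $p \ge 2$ in the corollary: it is dictated by Hausdorff--Young, while the upper bound $p \le m$ is inherited from Theorem~\ref{thm:Stein-Weiss}. Substituting back gives $C\bigl[\,\|(-\Delta/4\pi^2)^{\beta/2} f\|_{L^{p'}(\real^n)}\,\bigr]^m$, as claimed.

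I do not expect any substantive obstacle here: each ingredient is either quoted from the theorem just proved or from a classical inequality, and the proof is essentially a factorization followed by two invocations. The only subtlety worth verifying is that replacing $\widehat F$ by $|\widehat F|$ is legitimate, which is immediate because the kernel $\prod_{i<j} |x_i - x_j|^{-\gamma}$ is nonnegative and the outer absolute value dominates the modulus of the product; and that the Stein--Weiss estimate is finite under the stated norm, which is exactly the information supplied by Hausdorff--Young.
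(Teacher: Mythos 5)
Your argument is correct and is essentially the paper's own route: the paper simply declares the corollary ``a direct restatement of the theorem,'' meaning exactly the factorization $\widehat f(x_k)=|x_k|^{-\beta}\,\widehat{F}(x_k)$ with $F=(-\Delta/4\pi^2)^{\beta/2}f$, followed by Theorem~\ref{thm:Stein-Weiss} applied to the nonnegative function $|\widehat F|$ and the Hausdorff--Young inequality $\|\widehat F\|_{L^p}\le\|F\|_{L^{p'}}$, which is precisely where the restriction $p\ge 2$ enters. You have only made explicit the steps the paper leaves implicit, so no further changes are needed.
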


\begin{proof} 
Apply the Brascamp-Lieb-Luttinger rearrangement inequality for the integral that 
appears in equation~\eqref{eq:Stein-Weiss} so that the function $f$ may be taken to 
be radial decreasing. 
Then follow the development of the proof as for Theorem~\ref{thm:M HLS inequality} 
and the inequality \eqref{eq:radial decrease2}. 
Here one obtains the similar functional integral
$$\int_{\real^m \times (S^{n-1})^m} \prod h (t_k) \prod_{i<j} 
\Big[ \cosh (t_i - t_j) - \xi_i \cdot \xi_j\Big]^{-\gamma/2} dt_1 \cdots dt_m \ 
d\xi_1 \cdots d\xi_m$$
and since $(m-1)\gamma < 2n/p'$ due to $\beta >0$, application of the multilinear 
Young's inequality (Theorem~\ref{thm:Young-not-attained}) will show that the functional 
is bounded for $1<p\le m$ and allow the Helly selection principle to show the existence 
of an extremal function where the optimal constant is attained for the index range $1<p<m$.
The corollary is a direct restatement of the theorem to demonstrate the connection with 
embedding.
\end{proof}

%%%%%%%%
%\section{Multilinear Hardy-Littlewood-Sobolev inequality}

\section*{Acknowledgements}

I would like to thank Guozhen Lu for his warm encouragement.

%%%%%%%%%%%%%%%%%%%%%

\end{document}